\theoremstyle{plain}
\newtheorem{thm}{Theorem} 
\newtheorem{theorem}[thm]{Theorem}
\newtheorem{remark}[thm]{Remark} 
\newtheorem{lemma}[thm]{Lemma}
\newtheorem{corollary}[thm]{Corollary} 
\newtheorem{question}[thm]{Open Problem} 
\newtheorem{conjecture}[thm]{Conjecture}
\numberwithin{thm}{section}
\def\e{\mathbb{E}\, }
\def\ex{\mathbb{E}_x\, }
\def\ext{\mathbb{E}_{\theta,x}\, }
\def\p{\mathbb{P} }
\def\BN{\mathbb{N} }
\def\BR{\mathbb{R} }
\def\BZ{\mathbb{Z} }
\def\BF{\mathbb{F} }
\def\dtv{d_{\rm TV}}
\def\cL{\mathcal{L}}
\def\N{{\overline{N}} } % for lumped together counts
\newcommand{\ignore}[1]{ }
\def\bC{{\mathbf{C}}}
\def\bD{{\mathbf{D}}}
\def\ba{{\mathbf{a}}}
\newcommand{\baj}[1]{{\ba^{(#1)}}}
\def\.{\hskip.06cm}
\title[PIPARCS]{
Poisson and independent process approximation for random combinatorial structures  with a given number of components, and near-universal behavior for low rank assemblies
%Poisson and independent process approximation to random combinatorial structures, with an application to low rank assemblies 
}  
\author{Richard Arratia and Stephen DeSalvo}
\date{July 4, 2016}  %May 25, 2016}                                  
\def \ba {{\bf a}}
\begin{document}
\begin{abstract}
We give a general framework for approximations to combinatorial assemblies, especially suitable to the situation where the number $k$ of components is specified, in addition to the overall size $n$.   
This involves a Poisson process,   which, with the appropriate choice of parameter,  may be viewed as an extension of saddlepoint approximation.

We illustrate the use of this by analyzing the component structure when the rank and size are specified, and the rank,  $r := n-k$, is small relative to $n$.   There is near-universal behavior, in the sense that apart from cases where the exponential generating function has radius of convergence zero,   for $\ell=1,2,\dots$, when   $r \asymp n^\alpha$ for fixed $\alpha \in (\frac{\ell}{\ell+1}, \frac{\ell+1}{\ell+2})$, the size $L_1$ of the largest component converges in probabiity to $\ell+2$.  Further, when $r \sim t n^{\ell/(\ell+1)}$ for a positive integer $\ell$,  and $t \in (0,\infty)$,   $\p(L_1 \in \{\ell+1,\ell+2\}) \to 1$, with the choice governed by   a Poisson limit distribution for the number of components of size $\ell+2$.   This was previously observed, for the case $\ell=1$ and the special cases of permutations and set partitions, using Chen-Stein approximations for the indicators of attacks and alignments, when rooks are placed randomly on a triangular board.  The case $\ell=1$ is especially delicate, and was not handled by previous saddlepoint approximations.

%The component process of governs the sizes and number of components of a random combinatorial structure.  The component process of set partitions governs the number and type of each block size
%We prove quantitative bounds on  Poisson process approximation for the block sizes in a random set partition, and a similar approximation for the cycle lengths in a random permutation, using a bijection involving non-attacking rooks on the lower triangular half of a chess board.
\end{abstract}

\maketitle

\tableofcontents

\section{Introduction}\label{sect intro}
 
\subsection{Overview of this paper} 
%\emph{Overview of this paper}. 
  There are good independent process approximations for random decomposable combinatorial structures, including the broad classes of assemblies, multisets, and selections; see~\cite{IPARCS}.
In particular for assemblies, though not for the the other two, the approximating independent variables optimally come from a Poisson process. 
%, giving a useful handle on combinatorial structures with a given number of components, summarized by 
This specific structure leads to an especially effective representation of assemblies with a given number of components, as summarized in \eqref{Y} -- \eqref{CRY T}.  
%So we will first review the overall picture 
%for independent process descriptions of combinatorial objects, then point out how the Poisson case allows additional structure and a new venue for saddlepoint analysis,  and finally give an example:  a universal limit for low rank assemblies. 

The plan of this paper is as follows:  We will first review conditional
independent process descriptions of general combinatorial objects. Then,
specializing to assemblies, we will describe the additional Poisson structure,
and the ensuing saddlepoint analysis.  We conclude with a specific application,
a universal limit law for low rank assemblies. 
The conclusions of this low rank limit law do \emph{not} apply to multisets or
selections: while polynomials over finite fields are very close to random permutations
in many properties of the joint distribution of the sizes of the parts, the \emph{conditional}
distributions, conditioned on the existence of very many parts, are quite different.

Here is an overview of the universal low rank limit for assemblies.   First,  Theorem  \ref{thm 1} handles the key critical case,  picking an assembly uniformly at random from the $p(n,k)$ instances of size $n$ having exactly $k$ components, in a regime where,  for some fixed $t>0$,  $n$ and $k$ tend to infinity together with $n-k \sim t \sqrt{n}.$ 
 (\cite[Theorem 3]{granville}  gives some information about a very different large deviation regime,  including cases where $k \sim t n^\alpha$ for fixed $\alpha \in (0,1/2)$.)
The behavior is that the largest component has, with high probability, size 2 or 3;   the number of components of size 3 has a limit Poisson distribution.   This case cannot be handled by traditional saddlepoint analysis,  where Gaussian behavior plays the crucial role.

\ignore{ % via email
Two special assemblies are set partitions, and permutations,  so that $p(n,k)$ is a Stirling number of the second or first kind.    For these two cases, the
Poissonian behavior when $n-k \sim t \sqrt{n}$ was discovered in \cite{Stirling}  and \cite{Stirling2}, and the limit behavior was derived  from Chen-Stein analysis of combinatorial bijections,  involving $n-k$ rooks placed on a triangular board of size $n$.
For most aspects of component behavior,   permutations and set partitions have very different behavior,  for example a random permutation tends to have around $\log n$ cycles  but a random set partition tends to have around $n/\log n$ blocks.   The \emph{only} thing permutations and set partitions have in common is the structure of an \emph{assembly};  the Poissonian  low rank limit law does \emph{not} apply to multisets, such as  random polynomials over a finite field.   Strikingly,   random polynomials over a finite field are very close to random permutations, with respect to most aspects of the behavior of the distribution of part sizes,  but not with respect to that aspect  where the number of parts $k$ is conditioned to be so  large that  $n-k$  grows like $n^\alpha$ for fixed $\alpha \in (0,1$).    
%So the results based on rooks inspired us to look for similar behavior in assemblies in general.  
}

Two special assemblies are set partitions, and permutations,  so that $p(n,k)$ is a Stirling number of the second or first kind.    For these two cases, the
Poissonian behavior when $n-k \sim t \sqrt{n}$ was discovered in \cite{Stirling}  and \cite{Stirling2}, and the limit behavior was derived  from Chen-Stein analysis of combinatorial bijections,  involving $n-k$ rooks placed on a triangular board of size $n$.
For most aspects of component behavior,   permutations and set partitions have very different behavior,  for example a random permutation tends to have around $\log n$ cycles  but a random set partition tends to have around $n/\log n$ blocks.   The \emph{only} thing permutations and set partitions have in common is the structure of an \emph{assembly};  the Poissonian  low rank limit law does \emph{not} apply to multisets, such as  random polynomials over a finite field.

Theorem \ref{thm 1}  has an asymptotic error bound of the form $O_t(\log^2 n/\sqrt{n})$.   It is essential to be aware that for a specific instance of $n$ and $k$, big O asymptotics provide no information at all.   Theorem \ref{SD thm 1}
carries out essentially the same analysis, but gives a quantitative, i.e., completely effective, 
error bound, as in   \cite{rosser}.  
Thus, the bound in  Theorem \ref{SD thm 1} is a complicated  but explicit function $u_M(n,k)$ of $n$ and $k$, and the parameters of the assembly, with the property that under the regime with $n-k \sim t \sqrt{n}$, asymptotically we have $u_M(n,k) = O_t(\log^2 n/\sqrt{n})$ --- thereby giving an alternate proof of  Theorem \ref{thm 1}.

We round out the analysis of low rank assemblies with theorems treating the low rank behavior on both sides of the critical regime  $n-k \sim t \sqrt{n}$.   
On the side where $n-k$ is smaller, handling the case $n-k \to \infty$ with $(n-k)/\sqrt{n} \to 0$, the error bound in Theorem~\ref{thm 1.5} is quantitative \emph{and} asymptotically sharp.  On the side where  $n-k$ is larger,  Theorem \ref{thm 2} handles the case where  $n-k \asymp n^\alpha$ for fixed $\alpha \in (1/2,1)$.   For each $\ell=2,3,\dots$,  there is a critical case:  when $\alpha = \ell/(\ell+1)$,  that the largest component has, with high probability, size $\ell+1$ or $\ell+2$;   the number of components of size $\ell+2$ has a limit Poisson distribution.  

\subsection{Connections with enumerative combinatorics}

Our results are parallel to classical enumerative combinatorics, where there has been much interest in counting the number of such restricted structures and proving certain smoothness conditions like unimodality and log-concavity. 
Much of the theoretical treatment centers on saddle point analysis, whereas our approach is distinctly probabilistic. 
Nevertheless, one sees the same quantities appear both in the classical analysis of generating functions as well as the probabilistic treatment involving conditional distributions. 
The goal is the same, which is to describe the
internal shape
% intricate structure   TOO MUCH repetition of "structure"
of these interconnected structures, in which there are many statistics of interest, e.g., enumeration, smoothness, component sizes, limit shapes, large deviations, etc.  The underlying theme is quantifying the severity of dependence between the components with respect to any desired statistic.  

A motivating example is the set of integer partitions of size~$n$ into exactly $k$ parts, for which there is an extensive history. 
Integer partitions are an example of a multiset, and thus are not covered in our scope.  
In addition, specifically for integer partitions there is a well-known bijection which allows one to instead consider the number of partitions of $n$ into parts of size at most $k$, and the generating function is most obliging in this setting for asymptotic analysis~\cite{ErdosLehner, Szekeres1, Szekeres2}. 
Erd\H{o}s and Lehner looked at $k \sim \frac{\sqrt{n}}{2c} \log n$, which governs the size of the largest part in a uniformly chosen integer partition of size~$n$. 
For $k$ small, i.e., $k = o(\sqrt{n})$, there is one asymptotic formula for $p(n,k)$ that holds uniformly~\cite{Szekeres1}. 
For larger $k$, if one is content with formulas involving implicit parameters which change character depending on $k$, then~\cite[Theorem~1]{Szekeres2} provides a complete answer, which includes the Hardy-Ramanujan formula as a special case; this was later proved by elementary means in~\cite{CanfieldElementary} and through probabilistic means in~\cite{Romik}. 
One of the motivations for obtaining such enumerative formulas is presented in~\cite[Theorem~2]{Szekeres2}, which demonstrates that the number of partitions of size~$n$ into exactly $k$ parts is unimodal in $k$. 
Similar treatments were undertaken for partitions into distinct parts~\cite{AlmkvistUnimodal}, and indeed unimodality and log-concavity is a traditional and lively topic in enumerative combinatorics~\cite{StanleyLogConcave, BrentiUpdate}.
From a probabilistic point of view, such smoothness properties of a sequence correspond to central and local limit theorems, see~\cite{Bender, LCLT2, LCLT3}.

A closer parallel, and indeed a special case which motivated the present document, is the asymptotic analysis surrounding Stirling numbers of the second kind, denoted by $S(n,k)$, 
%a particular kind of assembly. 
corresponding to the basic combinatorial assembly, set partitions.
There is an equally extensive history for Stirling numbers, see for example~\cite{Louchard2, ChelluriRichmond, Stirling} and the references therein, which begins with the elementary analysis of Jordan~\cite{Jordan}  which covers $S(n,k)$ and $S(n,n-r)$ for $k$ and $r$ fixed. 
As with integer partitions, one must decide what constitutes an answer, as the seminal work of Moser and Wyman~\cite{MoserWyman2} essentially characterizes the asymptotic behavior for the entire range of values of $k$, \emph{albeit with implicitly defined parameters}. 
A detailed analysis was recently carried out by Louchard~\cite{Louchard2}, in which he obtained an asymptotic expansion in terms of explicit parameters, but with gaps at $r \sim t\, n^{\ell/(1+\ell)}$ for $t>0$ and $\ell = 1,2,\ldots$. 
The gap at $\ell=1$ was filled by the authors in~\cite{Stirling}, using a bijection involving non-attacking rooks on a lower triangular %diagonal 
chess board. 
The intuition is that while saddle point analysis paints a broad brush, it is essentially capturing a central limit theorem, dominated by a normal distribution; however, at $r \sim t\, n^{\ell/(1+\ell)}$, the behavior is Poissonian, and so the usual analytical methods either break down or become obfuscated by the implicitly defined parameters. 
Stirling numbers of the first kind follow a similar tradition, see for example~\cite{Jordan, MoserWyman1, Louchard1, Stirling, ChelluriRichmond} and the references therein.

Another important aspect of asymptotic analysis is the distinction between obtaining big O error bounds, with statements such as \emph{there exists an $n_0$ such that \ldots}, versus quantitative bounds, which provide statements such as \emph{for all $n \geq 26$ \ldots}. 
For example, one can prove using the first term of the Hardy-Ramanujan asymptotic formula~\cite[Equation~(5.5)]{HR18} that \emph{there exists an $n_0$ such that} the number of integer partitions of $n$ is a log-concave sequence for all $n \geq n_0$, but their analysis cannot specify the smallest value of $n$ for which this property is guaranteed to hold\footnote{Note that one should not attempt to prove even the asymptotic log-concavity of the partition function using~\cite[Equation~(1.41)]{HR18}; see~\cite[Section~7]{logconcave} and~\url{http://tinyurl.com/kkc6fwf}.}. 
In order to obtain $n_0 = 26$, one would need the quantitative bounds provided by Rademacher~\cite{Rademacher} or Lehmer~\cite{Lehmer39}, along with some elementary albeit tedious analysis, see~\cite{Nicolas}; see also~\cite{logconcave}.

\subsection{Decomposable combinatorial structures}
\emph{Decomposable} combinatorial structures of size $n$  are often examined with respect to the underlying integer partition of $n$.  For a given instance of the structure of size $n$, having $k$ components, the integer partition is written $\lambda = (\lambda_1,\lambda_2,\dots,\lambda_k)$.  Here  $\lambda_j$ is the size of the $j$th largest component,  so that $n=\lambda_1+\cdots+\lambda_k$, and $\lambda_1 \ge \lambda_2 \ge \cdots \ge \lambda_k \ge 1$.
Picking uniformly at random from the $p(n)$ structures of size $n$,  the corresponding random integer partition is $(L_1,L_2,\ldots,L_K)$,  where we write  $K \equiv K_n$ for the number of components,  and $L_j \equiv L_j(n)$ is defined to be the size of the $j$th largest component,  with the added provision that $L_j=0$ if $j>K$,  so that the random variable $L_j$ is defined for all $j \in \BN$, the set of positive integers.   We write  $\BZ_+$ for the set of non-negative integers.

An alternate way to report the information in $L_1,L_2,\ldots$ is to consider  $C_i \equiv C_i(n)$, the number of components of size $i$; of course $C_i(n):=0$ if $i>n$.  The notation $C_i \equiv C_i(n)$ says that we consider both $C_i$ and $C_i(n)$ to be the same;  which notation gets used depends on whether or not one wishes to emphasize the role of the parameter $n$.
The entire process of component counts is
\begin{equation}\label{def C}
  \bC  \equiv \bC(n) := (C_1(n),C_2(n),\ldots,C_n(n)).
\end{equation}
To review, the following two identities are trivial:
$C_1+2 C_2 +\cdots =n,  \ \  C_1+C_2 +\cdots = K$.  

{\bf I}ndependent {\bf p}rocess {\bf a}pproximations for {\bf r}andom {\bf c}ombinatorial {\bf s}tructures, 
conveniently abbreviated as IPARCS,  would be, in the greatest generality, a choice of decomposable combinatorial structure, and independent non-negative integer-valued random variables $Z_1,Z_2,\ldots$
such that, with respect to \emph{some} functionals $\phi: \BZ_+^n \to \BR$ of interest, the distribution of 
$\phi(\bC(n))$ is well-approximated by the distribution of $\phi( (Z_1,\ldots,Z_n) )$.   Examples of natural functionals of interest include:

\begin{itemize}
\item  The least common multiple of all component sizes, $\text{lcm}(L_1,\ldots,L_k)$.
\item  The indicator, 0 or 1, of the statement that all component sizes are distinct.
\item The size $L_1$ of the largest compnent.
\item The difference  $L_1-L_2$,  or the ratio  $L_2/L_1$,  for the largest and second largest component.

\item The number of components,  $K$ versus its approximation $Z_1+\cdots+Z_n$.
\item The number of components of size at most $b(n)$, versus $Z_1+\cdots+Z_b$, for some given function $b: \BN \to \BN$.
\item The number of components of size at least $a(n)$, versus $Z_a+\cdots+Z_n$,  for some given function $a: \BN \to \BN$.
\item The number of components of size in the range $[a(n),b(n)]$.
 
\item The \emph{process} of all small components,  $(C_1,\ldots,C_{b(n)})$, versus $(Z_1,\ldots,Z_b)$, for some given function $b: \BN \to \BN$.
\item The \emph{process} of all large components,  $(C_{a(n)},\ldots,C_n)$, versus $(Z_a,\ldots,Z_n)$, for some given function $a: \BN \to \BN$.

\end{itemize}
  
For each of the above functionals, there are examples of fundamental and natural combinatorial objects where a sensibly chosen independent process gives a good approximation, with respect to that functional, and there are other combinatorial examples where a sensibly chosen independent process does not give a good approximation.  Good examples for the last two functionals listed above are given by Pittel
\cite{PittelShape,PittelSetPartitions}, for integer partitions and set partitions, respectively.  
For contrast, an example of a functional where one \emph{never}
gets a good approximation:   the functional is the weighted sum of the component counts, so that
$\phi(\bC(n)) = C_1(n)+2C_2(n)+\cdots+nC_n(n)$ is the constant random variable of value $n$, while $\phi((Z_1,Z_2,\ldots,Z_n))=Z_1+2 Z_2+\cdots+n Z_n$ is not constant.

\subsection{The conditioning relation} \label{sect CR}

In addition to having independent random variables $Z_1,Z_2,\ldots,Z_n$ which give a usable approximation to a combinatorial structure $\bC(n)$, it is often the case, as surveyed in \cite{IPARCS} and \cite{ABTbook},  that there is a one parameter family of distributions, indexed by $x>0$ or $x$ in a bounded range such as $(0,1)$,
such that for every choice of $x$,  there is an equality of joint distributions, after conditioning on the event that $T_n=n$,  where  
\begin{equation}\label{T_n}
T_n := Z_1+2Z_2 + \cdots + n Z_n;
\end{equation}
that is, 
\begin{equation}\label{CR}
\text{ {\bf Conditioning Relation}:}  \ \ \ \cL(\bC(n)) = \cL_x((Z_1,Z_2,\ldots,Z_n) \ | \ T_n=n).
\end{equation}
When the value of the parameter $x$ has been chosen,  the above might be denoted more simply as $\bC(n) =^d (Z_1,Z_2,\ldots,Z_n) \ | \ T_n=n)$.  Implicit in the conditioning relation and the combinatorial setup is a relation, which in the case of \emph{combinatorial assemblies} 
is \eqref{x identity}, expressing $p(n)$ exactly, in terms of the normalizing constants for the random variables $Z_i$ -- these vary with $x$,  and $\p_x(T_n=n)$, which also varies with $x$.
 
In the context of probability theory, the \emph{saddle point heuristic} \cite{Daniels,saddlebook,Reid}  is that probability approximations, such as those recognizable by the factor $1/\sqrt{2 \pi \sigma^2}$ from the central limit theorem,  tend to be more accurate when appropriately tilting, perhaps by the Cramer tilt (called the Esscher tilt in \cite{saddlebook}, after \cite{esscher}).   Thanks to \eqref{x identity},
we can recognize   saddlepoint approximations to $p(n)$, the number of size $n$ instances of  a given type of assembly, as corresponding to approximations for $\p_x(T_n=n)$ carried out by picking a value of the parameter $x$ for which $\ex T_n$ is near $n$.

The \emph{extended} saddle point heuristic is in one sense more specific:  it says that in the $x$-indexed family of distributions for the right side of \eqref{CR},  when one wants to \emph{remove}
the conditioning on $T_n=n$ and use the independent process $(Z_1,Z_2,\ldots,Z_n)$ as an approximation
(with respect to various functionals $\phi$) for $\bC(n)$, good approximations are obtained by picking the parameter $x$  for which  $\ex T_n$ is equal to, or close to, its target in the conditioning,
$n$.  

The main point of the present paper is to extend this extended saddle point heuristic  to combinatorial assemblies of size $n$ and having $k$ components,  where the given size  $k=k(n)$ may be far from the typical number of components of a random structure of size $n$.  We provide a construction, \eqref{Y} -- \eqref{CRY T}, which forces the number of components to be a given $k$,  and still leaves a parameter $x$ and a sequence of independent random variables $Y_1,\ldots,Y_k$,  such that \emph{conditional} on the event $Y_1+\cdots+Y_k=n$, we achieve \emph{exactly} the distribution of the component structure $\bD(n,k)$ of a random assembly of size $n$ with $k$ components, with all $p(n,k)$ possibilities equally likely.  As a heuristic, when the parameter $x$ is chosen so that $\e(Y_1+\cdots+Y_k)$ is close to $n$,  the independent sample $Y_1,\ldots,Y_k$ is close in distribution, with respect to various functionals $\phi$,  to the distribution of $\bD(n,k)$.

\subsection{Three classes of examples:  selections, multisets, and assemblies}
\label{sect examples}

We take these three main classes in order of the superficial complexity of description; in particular,  assemblies come last.

\emph{Selections and multisets}   To begin,  one assumes that there is a universe $U$ of objects having a positive integer-valued weight,  such that for $i=1,2,\ldots$, the number $m_i$ of objects of weight $i$ satisfies:  $m_i$ is finite.  

 \emph{Selections}, in this context,  are simply all finite subsets of $U$,  with the natural notion, that the weight of a set is the sum of the weights of its elements.  Let $p(n)$ be the number of sets of weight $n$;  always,
$p(0)=1$, with the emptyset being the unique set of weight 1.   The characterizing ordinary generating function for this story is
\begin{equation}\label{selection p}
     P(z) := \sum_{n \ge 0} p(n) z(n) = \prod_{i \ge 1}  (1+x^i)^{m_i}.
\end{equation}
The corresponding independent random variables for the conditioning relation \eqref{CR} are Binomial($m_i,x^i/(1+x^i))$,  so that the distribution of $Z_i$ is the $m_i$-fold convolution of the Bernoulli($p=x^i/(1+x^i))$ distribution on $\{0,1\}$.

 \emph{Multisets}, in this context,  are all finite cardinality multisubsets of $U$,  with the natural notion, that the weight of a multiset is the sum of the weights of its elements.  Let $p(n)$ be the number of multisets of weight $n$;  always,
$p(0)=1$, with the emptyset being the unique multiset of weight 1.   The characterizing ordinary generating function for this story is
\begin{equation}\label{multiset p}
     P(z) := \sum_{n \ge 0} p(n) z(n) = \prod_{i \ge 1}  (1-x^i)^{-m_i}.
\end{equation}
The corresponding independent random variables for the conditioning relation \eqref{CR} are NegativeBinomial($m_i,x^i)$,  so that the distribution of $Z_i$ is the $m_i$-fold convolution of the Geometric  (starting from zero, with ratio $x^i$) distribution on $\{0,1,2,\ldots\}$, that is, the distribution of $G$ with $\p(G \ge k)=x^{ik}, k=0,1,2,\ldots$.

Examples of multisets and selections include

\begin{itemize}

\item  {\bf Integer partitions}   Here $m_i=1$ for all $i \ge 1$,  and the sole object of weight $i$ in the universe 
$U$ \emph{is} the integer $i$ itself.   Multisets correspond to integer partitions, with no restrictions, and our $p(n)$ is the usual $p_n$,  satisfying the asymptotic relation 
\begin{equation}\label{HR}
%p_n \sim \exp(2 \sqrt{n \pi^2/6})/(\sqrt{48}n), 
p_n \sim \frac{e^{2 \sqrt{n \pi^2/6}}}{\sqrt{48}n}, 
\end{equation}
as found, and also more effectively approximated, by    Hardy, Ramanujan, Rademacher, and Lehmer,  \cite{HR18,Rademacher,Lehmer38,Lehmer39}.  Selections correspond to  integer partitions with all parts distinct.

\item{Polynomials over $\BF_q$}.  Here, the universe $U$ is the set of monic  irreducible polynomials over the finite field $\BF_q$ with $q$ elements, with weight being degree.  By unique factorization, multisets correspond to monic polynomials, and putting $p(n)=q^n$ 
in \eqref{multiset p} leads to the relations $q^n = \sum_{i | n}
i m_i$,  with $m_i \equiv m_i(q)$, which by M\"obius inversion is equivalent to $i m_i =\sum_{d | i} \mu(i/d) q^d$, so that
\begin{equation}\label{polys}
m_i(q) = \frac{1}{i}   \sum_{d | i} \mu(i/d) q^d   \sim \frac{q^i}{i};
\end{equation}
  this was known to Gauss, see \cite{Berlekamp}.  The explicit formula shows that as $i \to \infty$, $m_i(q) = (q^i/i)(1+O(q^{-i/2}))$ which is a remarkably easy and effective analog of the prime number theorem.   While multisets correspond to all monic polynomials, \emph{selections} correspond to \emph{square-free} monic polynomials.  The generating function \eqref{multiset p}
  does not converge at $x=1/q$,  but $x=1/q$ is the correct value to use,  so that the independent
  $Z_i$ have  $\e_x  Z_i  \sim 1/i$ as $i \to \infty$,  and we have a logarithmic combinatorial structure in the sense of \cite{ABTbook}, with the number of irreducible factors of a random polynomial of degree $n$  growing like $\log n$.

\item {Necklaces over an alphabet of size $q$.}  Same as the above, without the restriction that $q$ is a prime power; see for example~\cite{vanLint}.

\end{itemize}

\emph{Assemblies}  appear in 1974, called abelian partitioned composite, and recognizable via the exponential relation \eqref{exponential relation}, in Foata \cite{Foata}.  Assemblies are called \emph{species} by Joyal \cite{Joyal,Joyal2}; they are called exponential families, with decks and hands, by Wilf \cite{Wilf}; they are called \emph{uniform structures} in \cite[Theorem 14.2]{vanLint}; and   they are discussed extensively as the \emph{SET} construction for labelled structures in \cite[Section II]{Flajolet}. 
% query:stanley
%  does Enumerative combinatoris discuss assemblies?  YES, beginning of volume 2 
Wilf \cite[Section 3.18]{Wilf} supplies the early history:  Riddell in a 1951 thesis \cite[Footnote 18]{RU} has the exponential formula 
\eqref{exponential relation} in the context of simple graphs.  Bender and Goldman in 1971 \cite{BG}  and Foata and Sch\"utzenberger in 1970 \cite{FS} have the general assembly, called \emph{prefab} in the former, and \emph{compos\'e partitionelle} in the latter.

The simplest assembly  is \emph{set partition},  equivalently, arbitrary
\emph{equivalence relation}:  for a set partition of size $n$, the set  $[n]:=\{1,2,\ldots,n\}$ is decomposed as a disjoint union of nonempty subsets, referred to as the \emph{blocks}  or \emph{equivalence classes},  and the blocks are gathered as a set of blocks, rather than a list of blocks.   For an integer partition of $n$
having counts $\ba$, that is,  $a_i$ parts of size $i$, $i=1$ to $n$,  so that
\begin{equation}\label{def a}
\ba = (a_1,a_2,\ldots, a_n) \in \BZ_+^n, \text{ with }  a_1 + 2a_2 + \cdots + n a_n=n,
\end{equation}
the number of set partitions of \emph{type} $\ba$, that is, having $a_i$ blocks of size $i$, $i=1$ to $n$, is
\begin{equation}\label{set partition count}
N(n,\ba) = n! \ \frac{1}{\prod_1^n a_i! (i!)^{a_i}}.
\end{equation}

The general assembly is specified by a sequence of nonnegative integers $m_1,m_2,\ldots$,  which is encoded in the exponential generating function
\begin{equation}\label{def M}
M(z) := \sum_{i \ge 1}  \frac{m_i \, z^i}{i!}.
\end{equation}
If one doesn't explicitly state the range of summation, $i \ge 1$, then one should specify that $m_0=0$.
An instance of this assembly, of size $n$, is formed in two steps:  1)  pick a set partition on $[n]$, and 2)  for each block of size $i$,  \emph{decorate} that block in one of $m_i$ ways.  Hence, the
the number of $M$-assemblies  of \emph{type} $\ba$ is
\begin{equation}\label{assembly count}
N(n,\ba) = n! \ \prod_1^n  \frac{m_i^{a_i}}{ a_i! (i!)^{a_i}}.
\end{equation}
Summing over all $\ba$ satisfying \eqref{def a} yields $p(n)$, the total number of $M$-assemblies of size $n$; wrapping these up in an exponential generating function yields 
\begin{equation}\label{assembly p}
P(z):= \sum_{n \ge 0} p(n) z^n / n! .
\end{equation}   
Note that we always have $p(0)=1$ for the trivial but confusing reason that there is a unique equivalence relation on the empty set,  and since this has no blocks, there is exactly one way to decorate all these blocks.

The succinct characterization of an $M$-assembly is the exponential relation on exponential generating functions,
\begin{equation}\label{exponential relation}
     P(z) = \exp(M(z)).
\end{equation}
This is always a valid relation, in the sense of formal power series;  as an exercise, the reader is urged to name the radius of convergence of each of the examples presented after 
\eqref{x identity}.     
     
The corresponding independent random variables for the conditioning relation \eqref{CR} are Poisson, with
\begin{equation}\label{lambda}
 \ex Z_i  = \lambda_i \equiv \lambda_i(x) := \frac{m_i \, x^i}{i!},
\end{equation}
valid for any $x>0$.  The identity \eqref{CR} is very easily proved,  by comparison with \eqref{set partition count},  and by equating the normalizing constants for $\p(\bC(n)=\ba)$ and $\p_x((Z_1,Z_2,\ldots,Z_n) = \ba)$,  where $\ba$ as in \eqref{def a} has weighted sum $a_1+2a_2+\cdots+n a_n=n$,  whence we get the \emph{identity}
\begin{equation}\label{x identity}
     p(n) = \frac{n!}{x^n} \   \exp\left(\lambda_1(x)+\cdots+\lambda_n(x)\right)  \ \p_x(T_n=n).
\end{equation}

Examples of assemblies include

\begin{itemize}
\item  {\bf Set partitions}, decomposed into blocks.  Here,  $m_i=1$ for all $i \ge 1$, so $M(z)=e^z-1$,  and $p(n)$ is usually denoted as $B_n$, the $n$th Bell number.

\item  {\bf Permutations}, decomposed into cycles.   Here,  $m_1=m_2=1,m_3=2$; in general $m_i=(i-1)!$.  We have   $M(z)=\log(1-z)$, $p(n)=n!$, and $P(z)= 1/(1-z)$.

\item {\bf Random mappings}, i.e., arbitrary functions  $f:[n]  \to [n]$.   Of course, $p(n)=n^n$. The components are the
weakly connected components of the {random mapping digraph},  i.e. the  directed graph exactly $n$ edges, namely $(i,f(i))$.   Here $m_1=1,m_2=2, m_3=17$, and it turns out that $m_i = (i-1)! \sum_{j=0}^{i-1}  i^j/j!$.  

\item {\bf Simple graphs}, i.e., undirected graphs with no loops and no multiple edges.  Here, $p(n)=2^{n \choose 2}$,   and for small $i$, $m_i$ can be computed from the relation $P(z)=e^{M(z)}$.
It is easy to show that as $i \to \infty$, $m_i \sim p(i)$.  Hence  $M(\cdot)$ has radius of convergence zero,  and does \emph{not} satisfy the hypotheses of Theorems \ref{thm 1}.   This assembly does not follow the behavior described by Theorems \ref{thm 1}: picking uniformly from the $p(n,k)$ simple graphs on $n$ vertices with exactly $k$ components,  for $k= n -r$ where $r= \lfloor \sqrt{n} \rfloor$, it is fairly easy to see that  with probability tending to 1,  there is one large component, of size $r+1$,  and the other $n-r-1$  components are singletons.

\end{itemize}

\subsection{The finite case,  $T_n$, versus the infinite case,  $T$} \label{sect T_n versus T}

In some situations involving the conditioning relation \eqref{CR} and the finite sum $T_n$ in \eqref{T_n},  the infinite sum
\begin{equation}\label{T}
T := Z_1+2Z_2 + \cdots 
\end{equation}
%is cleaner and 
is more convenient to work with.  The random variable $T$ takes values in the extended nonnegative integers, $\{0,1,2,\ldots,\infty\}$,  and one of the requirements for $T$ to be useful is that $\p_x (T < \infty)=1$.
When $T$ is to be used,  recalling that $C_i(n):=0$ whenever $i>n$, 
the conditioning relation \eqref{CR} is changed to
\begin{equation}\label{CR infinite}
\text{ {\bf CR}:}  \ \ \ \cL((C_1(n),C_2(n),\ldots) = \cL_x((Z_1,Z_2,\ldots) \ | \ T=n).
\end{equation}
(If one is not using the finite sum $T_n$,  but only the infinite sum $T$, it would make sense to reuse
 the notation from \eqref{def C}, and define $\bC \equiv \bC(n):= (C_1(n),C_2(n)\ldots)$,  but since the purpose of this section is to clarify the similarities and differences between the two setups, we don't bother giving  $(C_1(n),C_2(n)\ldots)$ its own symbol.)
%  QUERY, should we add the following:
%And the notation in $\BZ_+^\infty$ takes fewer symbols than the notation in $\BZ_+^n$,  e.g., $(Z_1,Z_2,\ldots)$ is shorter than $(Z_1,Z_2,\ldots,Z_n)$.]
Note that \eqref{CR} is valid,  in the example where the combinatorial structure is  simple graphs, described at the end of Section \ref{sect examples},  even though the parameter $x$ is necessarily outside the circle of convergence, and $\p_x(T=\infty)=1$.  But this is \emph{not} an informative example for the choice $T_n$ versus $T$,  since for this example, the extended saddle point heuristic, described at the end of Section \ref{sect CR}, does not provide useful approximation for \emph{any} value of the parameter $x$.   

An informative example is random permutations. The exponential generating function in \eqref{assembly p} is $P(z)=1/(1-z)$, so that at $z=1$,  the series diverges.   Shepp and Lloyd \cite{Shepp} consider the conditioning relation \eqref{CR infinite} involving  $T$, with parameter $x<1$, so that $\e Z_i = x^i/i$ and $\e T = 1/(1-x)<\infty$, and take  but $x \to 1$ to get results by applying a Tauberian theorem. In contrast,  \cite[Theorem 2]{AT92} uses elementary analysis to get a completely effective error bound, with superexponentially fast decay, by considering $x=1$ 
but using $T_n$, with $\ex T_n =n$  at $x=1$.

Another informative example is polynomials over $\BF_q$.  The ordinary generating function in \eqref{multiset p} is $P(z) = \sum_{n \ge 0} p(n) z^n =  \sum_{n \ge 0} q^n z^n  = 1/(1-qz)$, and again, the most useful parameter choice is $x=1/q$, on the boundary of the region of convergence, but where the series converges and $1=\p(T=\infty)$.  One can work with the choice $x=1/q$ and the conditioning relation, as in \cite{ABTbook},  or more directly get approximations involving the $Z_i$, which are  NegativeBinomial($m_i,x^i/(1+x^i))$, with $x=1/q$, using inclusion-exclusion, as in \cite{ABT93}.

\ignore{  formal power series   versus analytic function,  T_n  versus T,   IPARCS and ABTbook use T_n,   Shepp and Lloyd used T but AT92   used T_n,   Flajolet book uses T.
and ? Fristead used ?

\cite{Flajolet}
\cite{AT92}
\cite{ABT93}
}

\section{Choosing uniformly from the $p(n,k)$ objects with $k$ components}\label{sect n k}

For a given type of decomposable combinatorial object, let $p(n,k)$ be the number of instances of size $n$, having exactly $k$ components.   For $M$-assemblies,  $p(n,k)$ can be computed from
\eqref{assembly count} by summing $N(n,\ba)$ over all $\ba$ with \emph{both}  $a_1+2a_2+\cdots+na_n=n$ \emph{and} $a_1+a_2+\cdots+a_n=k$.    More succinctly, $p(\cdot,\cdot)$ is determined by the two variable generating function relation
\begin{equation}\label{theta x}
  \sum_{n,k \ge 0} p(n,k) \frac{z^n \theta^k}{n!} = e^{\theta M(z)},
\end{equation}
which reduces to \eqref{exponential relation} by setting $\theta=1$.  

For set partitions,  $p(n,k)$ is typically denoted $S(n,k)$,  and is called a Stirling number of the second kind.  For permutations,  $p(n,k)$ is typically denoted $s(n,k)$ or $|s(n,k)|$, and is called an (unsigned) Stirling number of the first kind.

Pick uniformly from the $p(n,k)$ instances of a structure of size $n$ with $k$ components.  Similar to \eqref{def C},  we write $D_i \equiv D_i(n,k)$ for the number of components of size $i$.
%,  so that always
%$$
 %    D_1+2D_2+\cdots+nD_n=n  \ \text{ and }  D_1 + D_2 + \cdots + D_n = k$.
%$$     
The entire process of component counts is
\begin{equation}\label{def D}
  \bD  \equiv \bD(n,k) = (D_1(n,k),D_2(n,k),\ldots,D_n(n,k)).
\end{equation}
To review, the following two identities are trivial:
$D_1+2 D_2 +\cdots =n,  \ \  D_1+D_2 +\cdots = k$.  
Recall that in the similar statement following \eqref{def C},   we wrote  $C_1+C_2+\cdots=K$,  with the uppercase $K$ being the random number of components of a uniformly chosen instance of size $n$;  here for $D_1+D_2+\cdots$ we have the lowercase $k$, which is constant (even though for applications one takes $n,k \to \infty$ together, which is usually described via $k=k(n)$.)

Trivially, picking uniformly from a finite set $B$ (all $p(n)$ assemblies of size $n$) and then conditioning on landing in a given subset $B_0$  (all $p(n,k)$ assemblies of size $n$ with $k$ components) yields the uniform distribution on the subset $B_0$. Hence
for every $n,k$,  the distribution of $\bD(n,k)$ is the condtional distribution of $\bC(n)$ given that $K=k$:
\begin{equation}\label{trivial}
  \bD(n,k)  =^d  ( \bC(n) \ | \ K=k).
\end{equation}
    
It  then follows from \eqref{CR} combined with \eqref{trivial} that for every $x>0$
\begin{equation}\label{CR with k T_n}
\text{ {\bf CR}:}  \ \ \ \cL(\bD(n,k)) = \cL_x((Z_1,Z_2,\ldots,Z_n) \ | \ T_n=n \text{ and } Z_1+\cdots+Z_n=k).
\end{equation}    
(This may be confusing because of the change in notation, from $K$ to $Z_1+\cdots+Z_n$, but it is genuinely trivial, and corresponds to the associative property of multiplication:  we are biasing the distribution of $(Z_1,\ldots,Z_n)$ first by multiplying in the indicator of the event $Z_1+2Z_2+\cdots+nZ_n=n$, and then multiplying in the indicator of the event $Z_1+Z_2+\cdots+Z_n=k$.)

Finally, as in Section \ref{sect T_n versus T}, one may also work with the infinite sum $T$ from \eqref{T},  to get
\begin{equation}\label{CR with k T}
\text{ {\bf CR}:}  \ \ \ \cL((D_1(n,k),D_2(n,k),\ldots) = \cL_x((Z_1,Z_2,\ldots) \ | \ T=n \text{ and } Z_1+Z_2+\cdots=k).
\end{equation}

\subsection{Poisson process, conditional on having $k$ arrivals}\label{sect Poisson}

Consider a general Poisson process on a space $S$, with intensity measure $\mu$;  for simplicity of exposition, we restrict to the case $\lambda := \mu(S)< \infty$.  See for example 
\cite[Section II.37]{Williams}.  (A note on notation: in \eqref{lambda} -- \eqref{x identity}, and in this section,  $\lambda$ and $\lambda_i$ serve as Poisson parameters, as is typical notation in standard probability texts; in contrast, in the sections highlighting combinatorial arguments, $\lambda_i$ denotes the $i$th part of a partition $\lambda$ of the integer $n$, as is standard notation in combinatorics texts.) The Poisson  process is characterized by the requirement that for disjoint (measurable) $B_1,B_2,\ldots,B_r \subset S$,  with $N_i \equiv N(B_i)$ defined as the number of arrivals in $B_i$, one has that $N_i$ is Poisson with parameter $\mu(B_i)$, and   $N_1,\ldots,N_r$ are mutually independent.   Given $\mu$, there is a very simple construction of the desired Poisson process:  Let $Y$ be a random element of $S$, with distribution $(1/\lambda) \mu$, and let $Y,Y_1,Y_2,\ldots$ be i.i.d., and independent of a random variable $Z$, taken to be Poisson with  parameter $\lambda$. Now one simply defines the (multiset) of all arrivals to be the sample of random size $Z$, i.e., the multiset $\{Y_1,Y_2,\ldots,Y_Z\}$, so that  for any measurable $B \subset S$, $N(B) = \sum_{i \ge 1} 1(Y_i \in B, i \le Z)$.  (A similar story holds in the case where  $\mu$ is a sigma-finite meaure on $S$ with $\mu(S)=\infty$,  but one has to restrict to regions $R \subset S$ with $\mu(R)<\infty$, and some care must be taken to put the sigma-finite pieces together.)

The result of the above-described coupling is that, conditional on having $k$ arrivals overall, the arrivals \emph{are} the i.i.d.~sample of size $k$, i.e., $\{Y_1,Y_2,\ldots,Y_k\}$ considered as a multisubset of $S$. 
%For our combinatorial purpose, we approximate the component structure of assemblies, taking $S=[n]$ if we intend to use \eqref{CR},  and $S=\BN $  if we intend to use \eqref{CR infinite}.  
For the  purpose of approximating the component structure of assemblies,  one takes  $S=[n]$ if the intention is to use \eqref{CR},  and $S=\BN $  if the intention is to use \eqref{CR infinite}.
In either case, to write out the explicit recipe,  
%let 
%\begin{equation}\label{lambda i}
recall \eqref{lambda}, that
$\lambda_i \equiv \lambda_i(x) := m_i \, x^i / i!$,
%\end{equation}
and let
\begin{equation}\label{Y}
\p_x(Y=i) = \left\{  
\begin{array}{cc}
     \frac{\lambda_i}{\lambda_1+\cdots + \lambda_n}  & \text{if } i \in S=[n] \\
     \frac{\lambda_i}{\lambda_1+\lambda_2+\cdots}=\frac{\lambda_i(x)}{M(x)}  & \text{if } i \in S= \BN, \text{ and } M(x)<\infty.
 \end{array}
 \right.
 \end{equation}
 Let $Y,Y_1,Y_2,\ldots$ be i.i.d., and independent of a random variable $Z$, taken to be Poisson with  parameter $\lambda$,
 with  $\lambda = \lambda_1+\cdots + \lambda_n$ for the case $S=[n]$,  and  $\lambda = \lambda_1+\lambda_2+\cdots = M(x)$, for the case $S=\BN$ and $M(x)< \infty$.   We use these to construct, simultaneously,  the Poisson process, and for each $k=0,1,2,\ldots$,
a realization of the Poisson process conditional on having $k$ arrivals overall.

We view the above construction as a \emph{coupling}.  Given a value $k \ge 0$, we write $N_i \equiv N_i(k)$ for  the count of how many of $Y_1,\ldots,Y_k$ are equal to $i$.  Hence  $N_1+\cdots+N_n=k$
in case $S=[n]$,  or  $N_1+N_2+\cdots=k$, in case $S=\BN$.  We have, for each $x>0$
$$
   (N_1,\ldots,N_n)  =^d  ( (Z_1,\ldots,Z_n) | Z_1+\cdots+Z_n = k),   \text{  if }  S=[n] 
$$
and 
 $$
   (N_1,N_2,\ldots)  =^d  ( (Z_1,Z_2,\ldots) | Z_1+Z_2+\cdots=k),   \text{ if }  S=\BN, \text{ and } M(x)<\infty.
$$
Note that the sum of the arrivals $Y_j$ is the weighted sum of the counts, i.e., in the case $S=[n]$, always $Y_1+\cdots + Y_Z =N_1(Z)+2N_2(Z)+\cdots+n N_n(Z)$.  %, and hence upon conditioning (recalling our notation choice, $N_i \equiv N_i(k)$), we have that jointly, $((N_1,\ldots,N_n), Y_1+\cdots+Y_k) =^d ((Z_1,\ldots,Z_n),k) \ | (Z_1+\cdots+Z_n=k) )$.
Hence by further conditioning on the weighted sum of the $Z_i$ being equal 
to $n$, \eqref{CR with k T_n}  and  \eqref{CR with k T}  imply that
\begin{equation}\label{CRY T_n}
\text{ {\bf CR}:}  \ \ \ \cL(\bD(n,k)) = \cL_x((N_1,N_2,\ldots,N_n) \ | \ Y_1+\cdots+Y_k = n).
\end{equation}    
\begin{equation}\label{CRY T}
\text{ {\bf CR}:}  \ \ \ \cL((D_1(n,k),D_2(n,k),\ldots)) = \cL_x((N_1,N_2,\ldots) \ | \  Y_1+\cdots+Y_k = n).
\end{equation}   

Each of \eqref{CRY T_n} and \eqref{CRY T} leads to an exact expression, similar to 
\eqref{x identity} for $p(n)$, but now  relating $p(n,k)$ and
$\p_x(Y_1+\cdots+Y_k=n)$.
Let $\ba$ satisfy \eqref{def a} and the additional condition that $a_1+a_2+\cdots+a_n=k$.
Then  \eqref{assembly count} implies that
\begin{equation}\label{k identity}
\p(\bD(n,k) = \ba)=\frac{n!}{p(n,k)} \  \prod_1^n  \frac{m_i^{a_i}}{ a_i! (i!)^{a_i}}.
\end{equation}
On the right side of \eqref{CRY T_n} and \eqref{CRY T},  the conditional probability has
denominator  $\p_x(Y_1+\cdots+Y_k=n)$  and numerator
\begin{equation}\label{multinomial}
\p_x((N_1,N_2,\ldots)=\ba)= {k \choose a_1,a_2,\ldots,a_n} \ \prod_1^n \ (\p_x(Y=i))^{a_i}.
\end{equation}
Combining \eqref{lambda}; the first case of \eqref{Y}; \eqref{CRY T_n} as the statement that
$\p(\bD(n,k) = \ba) =\p_x((N_1,N_2,\ldots,N_n)=\ba)/\p_x(Y_1+\cdots+Y_k=n)$; \eqref{k identity};
\eqref{multinomial}; and cancelling common factors, we get
\begin{equation}\label{pnk T_n}
\forall x>0, \ p(n,k) =\frac{n!}{k!} \  \frac{\left(\lambda_1(x)+\cdots+\lambda_n(x)\right)^k}{x^n} \ \p_x(Y_1+\cdots+Y_k=n).
\end{equation}
Using instead the second case of \eqref{Y}, and \eqref{CRY T}, in a similar way we get
\begin{equation}\label{pnk T}
\forall x\!:\! M(x)<\infty,\  \ p(n,k) =\frac{n!}{k!} \  \frac{\left(M(x)\right)^k}{x^n} \ \p_x(Y_1+\cdots+Y_k=n).
\end{equation}

Note the for the above,  so long as $a_1+2a_2+\cdots+n a_n=n$ and $a_1+a_2+\cdots+a_n=k$, 
all factors depending on $\ba$ cancel, leading to \eqref{pnk T_n} and \eqref{pnk T},  which are 
identities in $x$, with no trace of $\ba$ left behind.   Perhaps it will come as a surprise that there is another strategy for using estimates of
$\p_x(Y_1+\cdots+Y_k=n)$  to give estimates for $p(n,k)$,  for which the key is to name a \emph{specific} pivotal choice of $\ba$.   We carry this out in Corollary \ref{cor 1}, to get
asymptotics for $p(n,k)$ when $n-k \sim t \sqrt{n}$.

\subsection{Two versions of the $k$-Boltzmann Sampler}

The idea of ``Boltzmann sampling", popularized by \cite{Boltzmann},  is that when one wants to sample a structure of a given size $n$, uniformly distributed over the $p(n)$ possibilities of that size, it may be useful to ignore the requirement of getting size exactly $n$,  and instead generate a random object of size $T_n$ having mean around $n$.   From our point of view,  this is simply the 
combination of 
\eqref{T_n} and \eqref{CR}  while not requiring the occurrence of the conditioning event $\{T_n=n\}$, together with the extended saddle point heuristic,  that the conditioning has a mild effect on many functionals
of the joint distribution of $(Z_1,Z_2,\ldots,Z_n)$  when the tilting parameter $x$ is chosen so that
$\ex T_n$ is close to the target $n$.   One virtue of \cite{Boltzmann} relative to \cite{IPARCS} is that the classes studied include more than just assemblies, multisets, and selections.

Now suppose  one wants,  given $n$ and $k$,  to sample a structure, uniformly distributed over the $p(n,k)$ possibilities of size  $n$ having exactly $k$ components.   For the special case of assemblies,  the Poisson-process-inspired conditioning relations,  \eqref{CRY T_n} and 
\eqref{CRY T},  provide a very convenient analog of the Boltzmann sampler.  This analog is to pick the parameter $x$ for the distribution of $Y$ in \eqref{Y}  so that  $k\,  \ex Y$ is close to the target $n$,  then take an i.i.d.  sample $Y_1,\ldots,Y_k$ from this distribution,  and ignore the requirement
that $Y_1+\cdots+Y_k=n$.   In effect,  we generate a random structure of random size $Y_1+\cdots+Y_k$
by a method that \emph{guarantees} having \emph{exactly} $k$ components;  conditional on the event that 
 $Y_1+\cdots+Y_k=n_0$,  we have sampled  $\bD(n_0,k)$, the distribution of component counts induced from taking all $p(n_0,k)$ possible assemblies equally likely.   Note that when using \eqref{CRY T},  it is possible that a single $Y$ value will be larger than $n$;  if one were to be horrified by such an occurence, and tempted to throw away such samples, it would be preferable to use \eqref{CRY T_n} instead.

Finally, if the goal is to sample \emph{exactly}  from the distribution of $\bD(n,k)$,  one strategy is ``hard rejection/acceptance'' sampling, which in this context would repeatedly proposing a value of
$(Y_1,\ldots,Y_k)$,  testing to see if  $Y_1+\cdots+Y_k=n$,  and accepting if so, otherwise restarting.  With $p(x,k,n):= \p_x(Y_1+\cdots+Y_k=n)$, the expected number of proposals before finding an acceptable one is $1/p(x,k,n)$,  so the extended saddle heuristic, which corresponds to picking $x$ to maximize $p(x,k,n)$ so that the \emph{unconditioned} $k$-sample closely resembles the conditoned distribution, also serves as a recipe for relatively efficient simulation. However,  it is possible to do much better that taking  $1/p(x,k,n)$ independent proposals per achieved sample from the exact conditioned distribution, using probabilistic-divide-and-conquer,  see \cite{PDC,PDCDSH}.

We should mention an alternate strategy, which is not limited to assemblies.  Namely,  the two-variable generating function in \eqref{theta x} corresponds to a two-parameter family of distributions for independent $Z_1,Z_2,\ldots$,   see
\cite[Section 8]{IPARCS}.  Here we take mixed notation:  dummy variable $z$ in the generating function corresponds to $x$  when tilting, i.e., biasing with respect to $x^{T_n}$,  
(the proof of Lemma \ref{lemma small x} is an example of the use of this),
but we use $\theta$ for \emph{both} roles, first as %the name of 
the dummy variable in the generating function in \eqref{theta x}, 
\emph{and} as second the tilting parameter, when biasing with respect to $\theta^{K_n}$,   with $T_n:= Z_1+2Z_2+\cdots+nZ_n$
and $K_n := Z_1+Z_2+\cdots+Z_n$.   These tilted distributions have the property, similar to 
\eqref{CR with k T},  that  for every $(\theta,x)$,
\begin{equation}\label{CR theta x}
\text{ {\bf CR}:}  \ \ \ \cL((D_1(n,k),D_2(n,k),\ldots) )= \cL_{\theta,x}((Z_1,Z_2,\ldots) \ | \ T_n=n,  %\text{ and }
 K_n=k).
\end{equation}   
Relation \eqref{CR theta x}  applies not just to assemblies,  but also to multisets and selections.
For \emph{selections}, as characterized by \eqref{selection p},  the $(\theta,x)$ distribution for $Z_i$ is Binomial($m_i,\theta \, x^i/(1+\theta \, x^i))$.  For \emph{multisets}, as characterized by \eqref{multiset p},  the $(\theta,x)$ distribution for $Z_i$ is Negative Binomial($m_i, \theta \, x^i)$,  so that the distribution of $Z_i$ is the $m_i$-fold convolution of the Geometric  (starting from zero, with ratio $ \theta \, x^i$) distribution on $\{0,1,2,\ldots\}$, that is, the distribution of $G$ with $\p(G \ge k)=(\theta \,x^i)^{k}, k=0,1,2,\ldots$.
For \emph{assemblies},  the $(\theta,x)$ distribution for $Z_i$ is Poisson with parameter
$\lambda_i(\theta,x) = \theta \, m_i \, x^i / i!$;  this is an extension of \eqref{lambda}.   Notice that for use in \eqref{Y},
the Poisson process construction for guaranteeing $k$ arrivals,  substituting $\lambda_i(\theta,x)$ for
$\lambda_i(x)$  yields no change, as the factor $\theta$ \emph{cancels} from numerator and denominator   --- in the second case of \eqref{Y}, the fraction naturally changes to $\lambda_i(\theta,x)/\sum_{j \ge 1} \lambda_j(\theta,x) =( \theta \, m_i \, x^i/i!) /\, (\theta \,M(x))$.

The extended saddle point heuristic says that, with parameters $\theta,x$ chosen so that  $\ext K_n$ is near $k$  and $\ext T_n$ is near $n$,  then the conditioning on the right hand side of \eqref{CR with k T} has a mild effect,  so the unconditioned $(Z_1,Z_2,\ldots,Z_n)$ is, with respect to various functionals, a good approximation to the distribution of $\bD(n,k)$.   Hence,  without asking for theorems to certify how good an approximation one gets, there is the following: \\
\noindent {\bf The $k$-Boltzmann sampler, version 2}.  To quickly generate random surrogates for the component structure $\bD(n,k)$,  find parameters $\theta,x$  so that   $\ext K_n$ is near $k$  and $\ext T_n$ is near $n$, generate the independent process $(Z_1,Z_2,\ldots,Z_n)$ under its $(\theta,x)$ law,  and accept these counts, even if $Z_1+2Z_2+\cdots+nZ_n$ is not equal to $n$, \emph{or}
$Z_1+Z_2+\cdots+Z_n$ is not equal to $k$.

\section{Application:  low rank structures}

In the context of the Tutte polynomial and its rank and nullity expansion,  the \emph{rank} of a graph with $n$ vertices and $k$ connected components is defined to be $r := n-k$.   Graphs are an instance of a decomposable combinatorial structure,  and since we are working with situations such as $k = n - \lfloor t \sqrt{n} \rfloor$,
in which it is convenient to focus on $r=n-k$   and simply write $r \sim t \sqrt{n}$,  we shall henceforth use the term \emph{rank}   for $n-k$ in the broader context of decomposable  structures of size $n$ with $k$ components.

The extreme case of a low rank structure is the case $r=0$,  which forces all components to have size 1.   Rank 
$r=1$ forces the structure to have a single component of size 2, and all other components of size 1.  Jumping up a little,  rank $r=3$ implies that the component structure be one of three possible types, namely $\lambda = (2,2,2,1,1,\ldots,1)$ or $\lambda = (3,2,1,1,\ldots,1)$ or $\lambda=(4,1,1,\ldots,1)$.   Clearly, one imagines removing 1 from each part.   So we define the \emph{copartition} of a partition $\lambda$  to be the partition 
$\overline{\lambda}$ formed from $\lambda$ by removing parts of size 1 and reducing each remaining part by 1 ---
that is, erasing the first column (or row, depending on one's choice of orientation) from the Ferrers diagram.   For future reference,
\begin{equation}\label{copartition}
\lambda = (\lambda_1,\lambda_2,\dots,\lambda_k) \text{ with } \lambda_j>1,\lambda_{j+1}=\dots=\lambda_k=1 
\end{equation}
has copartition 
\begin{equation}\label{copartition 2}
 \overline{\lambda} = (\lambda_1-1,\lambda_2-1,\dots,\lambda_j-1).
\end{equation}
Thus, the three examples we gave for rank 3  correspond to $\overline{\lambda}=(1,1,1)$ and $\overline{\lambda}=(2,1)$ and $\overline{\lambda}=(3)$.  In general,  for any $0 \le r < n$,  the number of component types for a structure of size $n$ with rank $r$ is  $p_r$, the number of integer partitions of $r$, whose asymptotics as $r \to \infty$ are given by the Hardy--Ramanujan formula \eqref{HR}.

Partitions of $n$, with rank $r$, are in one to one correspondence with partitions of $r$;   the largest component of $\lambda$ is one more that the largest component of $\overline{\lambda}$.   Hence, as $r,n \to \infty$ with $n > r$ but otherwise logically independent of the relative growth rates of $r $ and $n$, the size of the largest component of a rank $r$ partition of the integer $n$ grows like (one plus)  the largest part of a random integer partition of $r$.   Erd\H{o}s and Lehner \cite{ErdosLehner} described the growth of the largest part of a random $n$ partition:
\begin{equation}\label{lehner}
   \text{ with } c = \pi/\sqrt{6}, \   \text{ as } n \to \infty, \ L_1  \sim \frac{1}{2c} \sqrt{n} \log (n)
\end{equation}
in probability,  so sampling uniformly from the $p(n,k)=p(n,n-r)$ integer partititions of rank $r$ we have
 \begin{equation}\label{lehner r}
    \text{ as } n,r \to \infty, \ L_1  \sim \frac{1}{2c} \sqrt{r} \log (r).
\end{equation}
%    L_1  --- asymptotically $\sqrt{r} \log r$  CHECK, and get the constant, and reference.
This behavior is in sharp contrast with the universal behavior for low rank assemblies, including set partitions, given by Theorems \ref{thm 1} -- \ref{thm 2}.

\begin{question}\label{poly question}
Consider polynomials over $\BF_q$, as in \eqref{polys}, in the low rank regime:  pick uniformly from the $p(n,k)$ monic polynomials of degree $n$ having exactly $k=n-r$ irreducible factors, where $r \sim \sqrt{n}$, or more generally  $r \sim t n^\alpha$ for $\alpha \in (0,1)$ with fixed $t \in (0,\infty)$.
Determine the behavior of $L_1$, the largest degree of an irreducible factor. 
\end{question}

For Question \ref{poly question} above,  it is easy to see that the behavior described in Theorem \ref{thm 1} does not \emph{hold}.   This is striking,  since  random polynomials over $\BF_q$, and random permutations, behave very similarly  with respect to typical  aspects of the component structure.  The key difference is that,  with respect to component structure, permutations are an \emph{assembly},  while polynomials are a \emph{multiset}.

\ignore{  %IGNORE
Another example to show that the behavior governed by  Theorems \ref{thm 1} -- \ref{thm 2} is essentially a property of \emph{assemblies}  is polynomials over $\BF_q$, as in \eqref{polys}.  With all $q^n$ monic irreducible polynomials of degree $n$
taken to be equally likely, the component count $\bC(n)$,  with $C_i(n)$ reporting the number of monic irreducible factors of degree $i$,
is qualitatively very similar to the cycle type $\bC(n)$ of a random permutation with all $n!$ possibilities taken to be equally likely.
Now consider the low rank regime, say $r \sim \sqrt{n}$. For random permutatations,  \eqref{expected at ell=1} simplifies to
the assertion that $D_3(n,n-r)$ converges in distribution to Poisson with mean $4/3$, the chance of having only fixed points and two-cycles converges to $e^{-4/3}$,   and Theorem \ref{thm 1} includes the
assertion that $D_4(n,n-r)$ converges in probability to zero.   But for random polynomials of rank $r$,  i.e., selecting uniformly from those monic irreducible polynomials of degree $n$ having exactly $n-r$ irreducible factors
}  %  END  ignore

\subsection{Low rank assemblies}

Recall from Section \ref{sect Poisson}  that the component structure of  \emph{assemblies} of size $n$ with $k$ components can be handled using  independent random $\BN$-valued  variables $Y_1,\ldots,Y_k$,   conditional on $Y_1+\cdots+Y_k=n$.  Since we focus on \emph{rank} and the associated copartition,
we consider $X_j := Y_j -1$, so that as events,  
\begin{equation}\label{r target}
\{Y_1+\cdots+Y_k=n\} = \{X_1+\cdots+X_k =r \}.
\end{equation}

Assume $m_1>0$ and $M$ has a nonzero radius of convergence.  It is then immediate that as $x \to 0$, $M(x) \sim m_1 x$. 

For any positive $x$ less than the radius of convergence of $M(\cdot)$, 
the $x$-distribution of $X := Y-1$ is given by
\begin{equation}\label{dist X}
 p_i \equiv p_i(x) \equiv  \p_x(X=i) = \frac{ m_1 x}{ M(x)} \ \frac{m_{i+1}x^i}{m_1 (i+1)!}, \ i=0,1,2,\ldots .
\end{equation}  
In \eqref{dist X}, the choice to also factor out $m_1$  is so that we may write
$$
   (p_0,p_1,p_2,\ldots)  \propto (1, \frac{m_2}{2 m_1} x, \frac{m_3}{6 m_1} x^2, \ldots),
$$    
with the constant of proportionality being $\frac{ m_1 x}{ M(x)} \to 1$ as $x \to 0+$.  

\begin{lemma}\label{lemma small x}  
Assume that $m_1 >0$ and $M$ has radius of convergence $R \in (0,\infty]$.
As $x \to 0+$,
\begin{equation}\label{claim 1}
\ex(X) \sim \frac{m_2}{2 m_1} \, x, 
\end{equation}
and also 
$$
\p(X=0) \to 1,   \ \p(X = 1) \sim \ex X, \ \text{and } \p(X \ge 2) = O(x^2).
$$
\end{lemma}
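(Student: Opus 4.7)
The plan is to exploit the hypothesis $R>0$ by Taylor-expanding $M$ around $0$. Since $M$ is given by an absolutely convergent power series for $|x|<R$, for each fixed $x_0\in(0,R)$ the tail $\sum_{i\ge j} m_i x^i/i!$ is $O(x^j)$ uniformly for $x\in(0,x_0]$. In particular, as $x\to 0+$,
\[
M(x) = m_1 x + \tfrac{m_2}{2}x^2 + O(x^3),
\]
with positive leading coefficient $m_1$, so $m_1 x/M(x) = 1/(1+O(x))\to 1$. From here the four claims reduce to arithmetic substitutions into the explicit formula \eqref{dist X}.

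First I would dispatch the three pointwise statements. Setting $i=0$ in \eqref{dist X} gives $\p(X=0) = m_1 x/M(x)\to 1$. Setting $i=1$ gives
\[
\p(X=1) = \frac{m_1 x}{M(x)}\cdot \frac{m_2 x}{2 m_1} = \frac{m_2}{2 m_1}\,x\,(1+O(x)) \sim \frac{m_2}{2 m_1}\,x.
\]
For the tail, collect the summands $i\ge 2$ and recognize them as a remainder of the series for $M$:
\[
\p(X\ge 2) = \frac{m_1 x}{M(x)} \sum_{i\ge 2} \frac{m_{i+1} x^i}{m_1(i+1)!} = \frac{1}{M(x)}\Bigl(M(x)-m_1 x-\tfrac{m_2}{2}x^2\Bigr) = \frac{O(x^3)}{m_1 x + O(x^2)} = O(x^2).
\]

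For the expected value, the cleanest route is to use the identity $\ex Y = xM'(x)/M(x)$, obtained by differentiating the series for $M$ term by term (valid inside the radius of convergence). Expanding $M'(x) = m_1 + m_2 x + O(x^2)$ and dividing by $M(x)/x = m_1 + (m_2/2) x + O(x^2)$ gives
\[
\ex Y = 1 + \frac{m_2}{2 m_1}\,x + O(x^2),
\]
so $\ex X = \ex Y - 1 \sim (m_2/(2m_1))\,x$. The relation $\p(X=1)\sim \ex X$ then follows immediately, since both sides are asymptotic to $(m_2/(2 m_1))\,x$.

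There is no serious obstacle here; the only point that genuinely uses $R>0$ is the Taylor remainder bound $M(x)-m_1 x-\tfrac{m_2}{2}x^2 = O(x^3)$. This estimate would fail in the simple-graph example, where $M$ has radius of convergence $0$, which is consistent with the paper's earlier observation that such cases must be excluded from the universal low-rank regime.
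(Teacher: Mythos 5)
Your proof is correct and follows essentially the same route as the paper: the paper writes $A(z):=M(z)/z$ and computes $\ex X = xA'(x)/A(x)$ via the probability generating function, which is algebraically identical to your $\ex X = xM'(x)/M(x)-1$, and it dismisses the three pointwise claims as obvious. Your write-up simply makes explicit the Taylor-remainder bounds that the paper leaves implicit.
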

\begin{proof}
Starting from \eqref{def M}, let $A(z) := M(z)/z = \sum_{i \ge 0} m_{i+1} z^i/(i+1)! = m_1 + (m_2/2)z +(m_3/6)z^2+ \cdots$.
The probability generating function for the $x$-distribution of $X$ in \eqref{dist X}, expressed with dummy variable $z$, is
$$
G(z) \equiv G_{x,X}(z)  := \ex z^X = \frac{A(xz)}{A(x)}
$$
with $G'(z)= x A'(xz)/A(x)$, hence $\ex X = G'(1)=x A'(x)/A(x)$.   Obviously, as $x \to 0$, $A(z) \to m_1$ and $A'(z) \to m_2/2$.    This establishes \eqref{claim 1}, and the remaining claims are obvious.
\end{proof}

%Now take $X,X_1,\ldots,X_k$ to be i.i.d.,   
Now take $X,X_1,\ldots,X_k$ to be i.i.d.,   and for $i \ge 0$,  let 
\begin{equation} \label{eq:N}
N_i := \sum_{j=1}^k 1(X_j=i) 
\end{equation}
 be the number of $i\,$s in the sample of size $k$.
  Note that for all outcomes, $N_0+N_1+\cdots = k$. \emph{Warning:}  for this application to low rank only,  we shift notation 
used in \eqref{CRY T}, without changing the letter $N$, \emph{from} $N_i \equiv N_i(k) := $ the count, how many of $Y_1,\ldots,Y_k$ are equal to $i$,  for $i\ge 1$, \emph{to} $N_i := $ the count, how many of $X_1,\ldots,X_k$ are equal to $i$,  for $i\ge 0$.   
Hence the conditioning event \eqref{r target} will now be expressed as  
\begin{equation}\label{target via counts}
\{ \omega:   X_1 + \cdots + X_k = r \}  = \{ \omega:  0 N_0 + N_1 + 2 N_2 + \cdots = r \}.
\end{equation}
%  QUERY:  Stephen do you prefer to keep the 0 n_0 term, or to leave is unstated?
The joint distribution of $(N_0,N_1,\ldots)$ is similar to a multinomial distribution, and indeed, for any list of $m$ disjoint sets $B_1,\ldots,B_m$ whose union is $\BZ_+$,  the \emph{lumped together} count vector $(\N_1,\ldots,\N_m)$ where  $\N_j := \sum_{i \in B_j} N_i$,  has a genuine multinomial
distribution, corresponding to $k$ tosses of an $m$-sided die,  in which face $j$ has probability
$\overline{p}_j := \sum_{i \in B_j} p_i$.   A particular case of the preceeding remark,   with $m=2$ and $B_1=$ the singleton $\{i\}$,  is that the distribution of $N_i$ is Binomial$(k, p_i)$.  

Our goal is, for an appropriate choice of $x$,  to first approximate $\p_x(N_1 + 2 N_2 + \cdots = r)$
and then approximate the conditional distribution of $(N_0,N_1,N_2,\ldots)$ given that
$N_1 + 2 N_2 + \cdots = r$ by its unconditional distribution.

\subsection{The critical regime for having components of size 3}

We have already assumed that $m_1 >0$ and $M$ has a strictly positive radius of convergence, and now we \emph{also}  assume that $m_2,m_3 > 0$.

Fix  $t \in (0,\infty)$ and fix any sequence $k(1),k(2),\ldots$ such that  $r   \equiv r(n) :=  n-k$ satisfies
\begin{equation}\label{critical 3}
  r \sim t \sqrt{n}.
\end{equation}  
Observe that this entails $r=o(n)$ and hence $k \sim n$,  so we also have $r \sim t \sqrt{k}$.
In particular, we have $r/k \to 0$ as $n \to \infty$.    We want to find $x$ so that  \eqref{dist X}
has  $k p_1 = r$. Lemma \ref{lemma small x} implies that for small $x$,   $p_1 \sim \ex X \sim m_2 x/(2 m_1)$,  so the first guess  $x_0=(2 m_1/m_2)  \, r/k$ would have $k p_1(x_0) \sim r$.
We have $x_0 \to 0$,  so for sufficiently large $n$,  $x_0 < R/2$,  and using $m_1 x/M(x) \to 1$, we can find $x$ relatively close to $x_0$, such that $k \, p_1(x)=r$.

For sufficiently large $n$, define
\begin{equation}\label{good x}
  x \equiv x(n) = \text{ the solution of } p_1 = \frac{r}{k},  \text{ so that }  \ex N_1 = k \, p_1 = r,
\end{equation}
and in case there is more than one positive solution, amend \eqref{good x} to choose the smallest positive solution.    It follows, from Lemma \ref{lemma small x} combined with \eqref{critical 3},  that this $x \equiv x(n)$ satisfies
$$
x \sim \frac{2 m_1}{m_2} \ \frac{r}{k}  \sim \frac{2 m_1}{m_2} \frac{t}{\sqrt{k}}
$$  
and hence 
\begin{equation}\label{lambda 3}
   k \,  p_2 \sim k \frac{m_3}{6 m_1} x^2 \to \frac{2 m_1 m_3}{3 m_2^2} t^2.
\end{equation}

We now sketch the overall argument for Theorem \ref{thm 1},  saving minor details for the formal proof.   
From \eqref{lambda 3}  we see that $N_2$,  whose marginal distribution is exactly Binomial$(k,p_2)$,
will be approximately Poisson  with mean given by the r.h.s.~of \eqref{lambda 3}.
We have $\e(3N_3+4N_4+\cdots)=O(x)$ times $\e N_2$,  so those contributions will be negligible.
Thus, in typical outcomes,  with \emph{small} exceptional probability, both  $K_0 := N_2 + N_3 +\cdots$  and  $R_0 := 2 N_2 + 3 N_3 + \cdots$ will be zero, or small positive integers.  Conditional on $K_0 = k_0$,  and also conditioning on which indices in the $k$-sample contributed to $K_0$,  there are $k-k_0$  rolls of the die, on which the event in \eqref{target via counts}  occurs if and only if each of those $k-k_0$ rolls shows face 0 or 1,  and the total number of 1s  is $r-r_0$.   But this is just asking for Binomial($k-k_0, p_1/(p_0+p_1))$ to have some value  $r-r_0$ close to its mean,  and the conditional probability is
close to $1/\sqrt{2 \pi k \, p_1}$,  regardless of which small values $k_0,r_0$.   So the overall 
probability of the event in \eqref{target via counts} is asymptotic to $1/\sqrt{2 \pi k \, p_1}$,  and conditional on that event, $N_2$ is still close to Poisson with mean given by the right side of 
\eqref{lambda 3}.  This shows that in the earlier consideration,  the \emph{small} probability for the exceptional event
needs to be $o(1/\sqrt{r})$, and this is sufficient to prove \eqref{largest at ell=1}.  Recall that via the shift $X = Y-1$,  the count $N_2$ here actually corresponds to blocks of size 3.

\begin{theorem}\label{thm 1}
Consider an assembly as governed by \eqref{def M} and \eqref{exponential relation}, and assume further that $m_1,m_2,m_3>0$ and $M(\cdot)$ has a strictly positive radius of convergence.
Fix $t>0$ and a sequence $k(1),k(2),\dots \ge 1$ such that $n-k(n) \sim t \sqrt{n}$.  Given $n$, pick an assembly uniformly from the $p(n,k)$ choices having exactly $k$ components.
Then, with probability tending to 1,  the largest component $L_1$ has size 2 or 3,   the number of components of size 3 has distributional limit given by 
\begin{equation}\label{expected at ell=1}
D_3(n,k) \to^d \text{ Poisson with mean }  \lambda \equiv \lambda(t,M) := 
\frac{2 m_1 m_3}{3 m_2^2} t^2,
\end{equation}
and hence for an assembly of size $n$ chosen uniformly from the $p(n,k)$ possibilities with $k$ components, we have 
\begin{equation}\label{largest at ell=1}
\p(L_1=2) \to \exp(-\lambda(t,M)).
\end{equation}
%Also,  if $n-k=o(\sqrt{n})$ and $n-k(n)>0$ for sufficiently large $n$, then  $\p(L_1=2) \to 1$.
The error in the approximation \eqref{largest at ell=1},   and indeed the total variation distance
between the ingredients in \eqref{expected at ell=1}, i.e., $\dtv(D_3(n,k), \text{Poisson}(\lambda))$,
is at most  $ 
 O_t((\log^2 n)/\sqrt{n})$.

\end{theorem}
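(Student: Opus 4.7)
The plan is to work from the conditioning relation \eqref{CRY T} after the shift $X_j = Y_j - 1$, so that $\bD(n,k)$ corresponds to i.i.d.\ variables $X_1,\ldots,X_k$ with $\p_x(X=i) = p_i(x)$ from \eqref{dist X}, conditioned on $X_1 + \cdots + X_k = r$. I would choose the tilt $x \equiv x(n)$ as in \eqref{good x} so that $\ex N_1 = k\,p_1 = r$. Lemma \ref{lemma small x} together with \eqref{critical 3} then yields $x \sim (2m_1/m_2)\,t/\sqrt{k}$, so that $k\,p_2 \to \lambda = 2 m_1 m_3 t^2/(3 m_2^2)$ and $k\,p_i = O_t(k^{1-i/2})$ for $i \ge 3$. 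Since $N_2$ has the exact $\text{Binomial}(k,p_2)$ law, a Le Cam / Chen--Stein bound gives $\dtv(N_2, \text{Poisson}(kp_2)) \le k\,p_2^2 = O(1/n)$, and a careful second-order expansion in \eqref{dist X} yields $|kp_2 - \lambda| = O_t(1/\sqrt{n})$, so the \emph{unconditional} distribution of $N_2$ is Poisson$(\lambda)$ to within $O_t(1/\sqrt{n})$ in total variation.

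The heart of the argument is to show that the conditioning $\{X_1+\cdots+X_k = r\}$ perturbs the distribution of $N_2$ by at most $O_t(\log^2 n/\sqrt{n})$. I would introduce the truncation level $b = C\log n$ and split: on the exceptional event $E = \{\max_j X_j > b\}$, a union bound using $p_i \le (C'x)^i$ gives $\p_x(E) = o(1/\sqrt{n})$ for $C$ large enough. Off $E$, I condition on the random set of indices $J \subset [k]$ with $X_j \ge 2$, on their values $(X_j)_{j \in J}$, and thus on $K_0 := |J| = N_2 + N_3 + \cdots$ and $R_0 := \sum_{j \in J} X_j = 2N_2 + 3N_3 + \cdots$. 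Given this data, the remaining $k - K_0$ variables are i.i.d.\ with $\p(X = 0) = p_0/(p_0+p_1)$ and $\p(X=1) = p_1/(p_0+p_1)$, and the target event becomes $\{N_1' = r - R_0\}$ for a $\text{Binomial}(k - K_0, p_1/(p_0+p_1))$ random variable, whose mean equals $r - O(\log n)$ and variance is $\sim r$ with $r \asymp \sqrt{n}$. A quantitative local CLT (à la Gnedenko, using the smoothness of the binomial) then gives
\[
\p_x\!\bigl(N_1' = r - R_0 \,\bigm|\, K_0, (X_j)_{j \in J}\bigr) = \frac{1}{\sqrt{2\pi k p_1}}\Bigl(1 + O_t(\log^2 n/\sqrt{n})\Bigr),
\]
uniformly over the non-exceptional configurations with $K_0, R_0 = O(\log n)$. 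Because the leading factor does not depend on $(K_0, R_0, J)$, averaging it out of numerator and denominator of $\p_x(\,\cdot\, | X_1 + \cdots + X_k = r)$ shows that the conditioning merely reweights $N_2$ by $1 + O_t(\log^2 n/\sqrt{n})$, proving \eqref{expected at ell=1} with the claimed error bound.

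Finally \eqref{largest at ell=1} follows from \eqref{expected at ell=1} and a short tail estimate: the event $\{L_1 = 2\}$ differs from $\{D_3 = 0\}$ exactly on $\{D_i \ge 1 \text{ for some } i \ge 4\}$, and $\p_x(D_i \ge 1 \text{ for some } i \ge 4) \le \sum_{i \ge 4} k p_i = O_t(1/\sqrt{n})$ by the geometric decay of $k p_i$, which survives the conditioning by the same LCLT-based argument.

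I expect the main obstacle to be the uniform local central limit theorem controlling $\p_x(N_1' = r - R_0 \mid K_0, J)$ with an \emph{absolute}, not merely $O(1/\sqrt{r})$, constant: the remainder must be $O_t(\log^2 n/\sqrt{n})$ rather than $O_t(1/\sqrt{n})$, and the extra $\log^2 n$ is precisely what buys enough room to (i) choose the truncation level $b = C\log n$ and (ii) absorb the $O(\log n)$ drift in the binomial mean caused by $R_0$. Getting these two sources of error to combine optimally — rather than each separately yielding a worse rate — will require threading the truncation, the LCLT remainder, and the tail bounds together carefully, which is where the $\log^2 n$ rather than $\log n$ enters.
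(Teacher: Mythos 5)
Your proposal is correct and follows essentially the same route as the paper's proof: the same tilt (the paper uses the variant of \eqref{good x} with $k\,p_1/(p_0+p_1)=r$, which is asymptotically equivalent), the same $O(\log n)$ truncation of $K_0$ and $R_0$, the same reduction, after conditioning on the indices with $X_j\ge 2$, to a binomial point probability within $R_0$ of its mean with relative error $O(R_0^2/r)=O_t(\log^2 n/\sqrt{n})$, and the same Le Cam step for the exact Binomial$(k,p_2)$ marginal of $N_2$. The one detail to patch is that your exceptional event $E=\{\max_j X_j> C\log n\}$ does not by itself force $K_0,R_0=O(\log n)$ (e.g.\ $N_2$ could be of order $\sqrt{n}$ with every $X_j\le 2$), so you must also place $\{2N_2+3N_3+\cdots\ge C\log n\}$ into the exceptional set; this costs only a Hoeffding tail bound on $N_2$ plus a first-moment bound on $3N_3+4N_4+\cdots$, exactly as the paper does with its events $B_2$ and $B_3$.
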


\begin{proof}
Pick $x$ as per \eqref{good x} \emph{modified} slightly, so that 
$$
   k \frac{p_1(x)}{p_0(x)+p_1(x)} = r.
$$   
The new choice of $x$ is asymptotic to the old choice, and we still have a 
result like \eqref{lambda 3}, namely  that $\e_x N_2 = O_t(1)$.  The implicit constant in the big O  depends both on $t$  and $M(\cdot)$, but we highlight the only the dependence on $t$,  since we consider the assembly $M$ as fixed.  %, and wish to deliver completely effective error bounds over all finite choices for $t>0$.   
The binomial distribution of $N_2$ satisfies the Hoeffding bound (see \cite[equation (41)]{baxendale}), for all $y >1$,  $\p(N_2 \ge y \, \e N_2) \le 
y^{-y\, \e N_2} e^{(y-1) \e N_2}$,  so we can pick $r_2 = O_t(\log n)$ and $n_2 \equiv n_2(t)$ so that the \emph{bad} event $B_2 :=\{ 2N_2 \ge r_2 \}$ has $\p_x(B_2) < 1/n$ for all $n > n_2$.
Similarly, but not as delicate, one can pick $r_3 = O_t(\log n)$ and $n_3 \equiv n_3(t)$ so that the \emph{bad} event $B_3 :=\{ 3N_3 + 4N_4+\cdots \ge r_3 \}$ has $\p_x(B_3) < 1/n$ for all $n > n_3$.   Combining, the bad event $B := B_2 \cup B_3$ has $\p_x(B)<2/n$ for all sufficiently large $n$,  and on the complementary event, $B^c$, with $k_1 =r_1 := r_2+r_3 = O_t(\log n)$ we have
$K_0 :=N_2+N_3+\cdots \le k_1$ and $R_0:=2 N_2 + 3 N_3 +\cdots \le r_1$.

Conditional on $\{K_0=k_0\}$ and further conditioning on \emph{which} of the $k-k_0$ indices $j\in [k]$ did not contribute to $K_0$, i.e.,  those $j$ for which $X_j=0$ or 1, we have $k-k_0$ independent trials where $p := \p(X_j=1)=1-\p(X_j=0) = p_1/(p_0+p_1)$,  and from \eqref{good x} 
we have $p\sim t/\sqrt{k}$.   For the binomial($k,p$) distribution,  the target $r$ is exactly the mean,
 the binomial point probability at the mean is approximately
$1/\sqrt{2 \pi k \, p } = 1/\sqrt{2 \pi r}$,  with error controlled by Stirling's formula;  the relative error is $O(1/\sqrt{k})$.  Likewise, for the perturbations, where $k$ is replaced by $k-k_0$ and the target is replaced by $r-r_0$,   with $0 \le k_0,r_0 \le k_1 = O_t(\log k)$,  (recalling that $k \sim n$), the probability that Binomial$(k-k_0,p)$ hits the point $r-r_0$ is asymptotically $1/\sqrt{2 \pi r}$. 
 %The relative error is uniformly bounded by a contribution on the order of $r_1/r$,   which is$O_t((\log n)/\sqrt{n})$,
For the relative error, the main contribution comes from the target being at most $r_0$ from the center,  and since the variance is order of $r$, the resulting relative error is $O(r_0^2/r) = O_t((\log n)^2/\sqrt{n}).$ 
 
The second paragraph of this proof shows that the \emph{contribution} to $\p_x(N_1+2N_2+\cdots = r)$
from the event $B^c$ is  asymptotically $1/\sqrt{2 \pi r}$, with relative error at most  
 $O_t((\log^2 n)/\sqrt{n})$.  Our bound $\p_x(B)<2/n$ from the first paragraph of this proof is of a smaller order,  so the net result is that $\p_x(N_1+2N_2+\cdots = r) \sim 1/\sqrt{2 \pi r}(1+O_t((\log^2 n)/\sqrt{n}))$.    Now that $\p_x(N_1+2N_2+\cdots = r)$ has been estimated asymptotically, the same argument from the second paragraph shows that the \emph{conditional} probability that
$N_2=m$ given that $N_1+2N_2+\cdots = r$ is relatively close to the unconditional probability, again with relative error that is $O_t((\log^2 n)/\sqrt{n})$.   Finally, the marginal distribution of
$N_2$ is exactly Binomial($k,p_1(x))$, with mean asymptotically $O_t(1)$, as given in detail by
\eqref{lambda 3} --- so the total variation distance from this binomial marginal distribution, to its Poisson approximation, is $O(1/k)=O_t(1/n)$.  This proves both \eqref{expected at ell=1} and
\eqref{largest at ell=1} and even shows that $\p(L_1=2) -\exp(-\lambda(M,t)) = 
 O_t((\log^2 n)/\sqrt{n})$.
 \end{proof}

\begin{corollary}\label{cor 1}
Consider an assembly as governed by \eqref{def M} and \eqref{exponential relation}, and assume further that $m_1,m_2,m_3>0$ and $M(\cdot)$ has a strictly positive radius of convergence.
Fix $t>0$ and a sequence $k(1),k(2),\dots \ge 1$ such that $r :=n-k(n) \sim t \sqrt{n}$. 
Then  % ,  with $\lambda \equiv \lambda(t,M)$ given by \eqref{expected at ell=1},
\begin{equation}\label{amazing}
p(n,k) \sim  \frac{n^{2r} \, m_1^{n-2r}  \, m_2^r}{r! \, 2^r} \
 \exp\left(-t^2 \left( 2 - \frac{2 m_1 m_3}{3 m_2^2} \right) \right),
%e^{\lambda(t,M)-2t^2}.
\end{equation}
and the relative error in \eqref{amazing} is  $O_t((\log^2 n)/\sqrt{n})$.
\end{corollary}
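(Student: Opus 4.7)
The plan is to apply the identity \eqref{k identity} at the \emph{pivotal} choice $\ba^* := (k-r,\,r,\,0,\dots,0)$, which is the unique $\ba \in \BZ_+^n$ satisfying $\sum a_i = k$ and $\sum i a_i = n$ supported on parts of size $1$ and $2$ (its copartition is $(1,1,\dots,1)$ of size $r$). Equation \eqref{k identity} rearranges to
$$
  p(n,k) \;=\; \frac{n!\, m_1^{k-r}\, m_2^r}{(k-r)!\, r!\, 2^r} \cdot \frac{1}{\p(\bD(n,k)=\ba^*)}.
$$
On the other hand, the CR \eqref{CRY T_n} combined with the multinomial formula \eqref{multinomial} gives $\p(\bD(n,k)=\ba^*) = \binom{k}{r}\, p_0^{k-r}\, p_1^r / \p_x(Y_1+\cdots+Y_k=n)$ with $p_0 = m_1 x/M(x)$, $p_1 = m_2 x^2/(2 M(x))$. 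Substituting, every $\ba^*$-dependent factor cancels and one recovers exactly \eqref{pnk T}:
$$
  p(n,k) \;=\; \frac{n!}{k!}\,\frac{M(x)^k}{x^n}\,\p_x(Y_1+\cdots+Y_k=n), \qquad x>0 \text{ arbitrary}.
$$

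Choose $x$ as in \eqref{good x}. The argument inside the proof of Theorem \ref{thm 1} already yields $\p_x(Y_1+\cdots+Y_k=n) = (1+O_t((\log^2 n)/\sqrt n))/\sqrt{2\pi r}$, so what remains is an asymptotic expansion of $(n!/k!)\cdot M(x)^k/x^n$ to precision $o(1)$ in the exponent. For the prefactor, $\log(n!/k!) = \sum_{j=0}^{r-1}\log(n-j) = r\log n - r^2/(2n) + O(r^3/n^2) = r\log n - \tfrac12 t^2 + o(1)$. For the main factor I would Taylor expand
$$
  \log M(x) \;=\; \log(m_1 x) + \frac{m_2 x}{2 m_1} + x^2 A + O(x^3), \qquad A := \frac{m_3}{6 m_1} - \frac{m_2^2}{8 m_1^2},
$$
so that $k\log M(x) - n\log x = k\log m_1 - r\log x + \tfrac{k m_2 x}{2 m_1} + k x^2 A + O(kx^3)$. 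The saddle identity $k m_2 x^2 = 2r M(x)$ (equivalent to \eqref{good x}) then gives the precise $\tfrac{k m_2 x}{2 m_1} = r\cdot \tfrac{M(x)}{m_1 x} = r + t^2 + o(1)$ together with $kx^2 \sim 4 m_1^2 t^2/m_2^2$, hence $kx^2 A = t^2(\tfrac{2 m_1 m_3}{3 m_2^2} - \tfrac12) + o(1)$; the tail $kx^3 = O(t^3/\sqrt n)$ is negligible. Finally, inserting $x \sim 2 m_1 r/(m_2 k)$ with the next-order correction $r/k$ gives $r\log x = r\log(2 m_1 r/(m_2 k)) + t^2 + o(1)$, which combined with $r\log n = r\log k + t^2 + o(1)$ yields $r\log n - r\log x = 2r\log n - r\log r + r\log(m_2/(2m_1)) - 2t^2 + o(1)$.

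Collecting all contributions and invoking Stirling $\log r! = r\log r - r + \tfrac12\log(2\pi r) + O(1/r)$ to absorb the residual $-r\log r + r - \tfrac12\log(2\pi r)$ (the last term coming from the $1/\sqrt{2\pi r}$ factor), one obtains
$$
  \log p(n,k) \;=\; 2r\log n + (n-2r)\log m_1 + r\log(m_2/2) - \log r! - t^2\Bigl(2 - \frac{2 m_1 m_3}{3 m_2^2}\Bigr) + o(1),
$$
which exponentiates to \eqref{amazing}. The relative error inherited from Theorem \ref{thm 1} dominates the other expansion errors ($O(r^3/n^2)$ in $\log(n!/k!)$ and $O(kx^3)$ in $\log M(x)$, each $O(1/\sqrt n)$ or smaller). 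The only nonroutine step is the bookkeeping: three distinct $\Theta(t^2)$ contributions — the $x^2$ term of $\log M(x)$ (which is why the hypothesis $m_3 > 0$ is needed), the $-r^2/(2n)$ correction in $\log(n!/k!)$, and the $r/k$-correction hidden in $\log n - \log k$ — must conspire to produce the claimed constant $2 - 2 m_1 m_3/(3 m_2^2)$ in the exponential.
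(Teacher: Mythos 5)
Your argument is correct, but it is a genuinely different route from the paper's. After your opening paragraph (where the pivotal $\ba^*$ cancels away and you recover the identity \eqref{pnk T}), you proceed by the ``identities in $x$'' strategy: choose $x$ as in \eqref{good x}, quote the intermediate estimate $\p_x(Y_1+\cdots+Y_k=n)=(1+O_t((\log^2 n)/\sqrt n))/\sqrt{2\pi r}$ from inside the proof of Theorem \ref{thm 1}, and then carry out a saddlepoint-style expansion of $(n!/k!)\,M(x)^k x^{-n}$, tracking three separate $\Theta(t^2)$ contributions; I checked the bookkeeping ($\tfrac{km_2x}{2m_1}=r+t^2+o(1)$, $kAx^2\to t^2(\tfrac{2m_1m_3}{3m_2^2}-\tfrac12)$, $\log(n!/k!)=r\log n-\tfrac12 t^2+o(1)$, and the $-2t^2$ hidden in $-r\log x$) and it does collapse to \eqref{amazing} after Stirling. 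The paper does the opposite: it deliberately does \emph{not} cancel the pivotal type, using instead the exact count $N(n,\ba)$ in \eqref{type 1} for $\ba=(n-2r,r,0,\dots)$, the elementary birthday asymptotic $(n)_{2r}/n^{2r}\to e^{-2t^2}$, and the identity $p(n,k)=N(n,\ba)/\p(L_1=2)$ combined with the conclusion \eqref{largest at ell=1} of Theorem \ref{thm 1} as a black box. (Indeed the paragraph before Corollary \ref{cor 1} advertises exactly this ``specific pivotal $\ba$'' strategy as the point of the corollary.) The paper's route is a few lines and only needs the theorem's statement; yours is longer, needs an intermediate estimate from the theorem's proof, and is more error-prone, but it makes transparent analytically where the constants $2$ and $\tfrac{2m_1m_3}{3m_2^2}$ come from and is the natural template for higher-order refinements such as Conjectures \ref{conj O1}--\ref{conj O3}. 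One small point to tidy: the estimate you quote from the proof of Theorem \ref{thm 1} is derived there with the modified tilt \eqref{good x 24} (i.e.\ $k\,p_1/(p_0+p_1)=r$) rather than \eqref{good x}; the two choices differ only at relative order $x^2$, so your expansion goes through either way, but you should say so explicitly rather than cite the estimate verbatim for a slightly different $x$.
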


\begin{proof}
With rank $r := n-k$, let $\ba=(n-2r,r,0,0,\ldots)$.  Note, this specifies the integer partition with
$(n-2r)+r = k$ parts, and is a partition of $1 \times (n-2r) + 2 \times r = n$,  so $N(n,\ba)$ is one of the contributions to $p(n,k)$, as discussed in the first paragraph of Section \ref{sect n k}.   From \eqref{assembly count}, and writing the falling power $x$ falling $i$ as  $(x)_i$, we have
\begin{equation}\label{type 1}
N(n,\ba) = \frac{n! \, m_1^{n-2r}  \, m_2^r}{(n-2r)! \, r! \, 2^r} = 
\frac{(n)_{2r} \, m_1^{n-2r}  \, m_2^r}{r! \, 2^r}.
\end{equation}
Using $r \sim t \sqrt{n}$ and the usual asymptotic for the birthday problem, that $(n)_i/n^i \sim \exp(-i^2/(2n))$ whenever %$i \to \infty$ with
 $i=o(n^{2/3})$, we have $(n)_{2r}/n^{2r} \to \exp(-2 t^2)$. 
Notice that, on the left side of \eqref{largest at ell=1}, we have
$$
\p(L_1=2) = \frac{N(n,\ba)}{p(n,k)}, \text{ so that }  p(n,k) =  \frac{N(n,\ba)}{\p(L_1=2)     }.
$$
Combining this with \eqref{expected at ell=1} and \eqref{largest at ell=1} yields the desired result.
\end{proof}

\begin{remark}\label{big Oh 1} 
 The upper bound on the relative error, proved in Theorem \ref{thm 1} and inherited by Corollary \ref{cor 1},
 is  $O_t((\log^2 n)/\sqrt{n})$.  
 This reflects our desire to be succinct. % and to not work very hard.  
 We believe that the true error is  order 
 $ 1/\sqrt{n}$, and will make formal conjectures out of this,  with Conjectures \ref{conj O1} --- \ref{conj O3}.  Note that we are working under the regime $r \sim t \sqrt{n}$,   so  $ 1/\sqrt{n} \sim t/r$.
 
 \end{remark}
 
\begin{conjecture}\label{conj O1}
Under the hypotheses of Theorem \ref{thm 1},  the result \eqref{largest at ell=1} can be improved to 
\begin{equation}\label{largest sharper 1}
\p(L_1=2) =  \exp(-\lambda(t,M))\left( 1+O_t(1/r) \right)  \end{equation}
$$  =  \exp(-\lambda(t,M))\left( 1+O_t(1/\sqrt{n}) \right).$$
\end{conjecture}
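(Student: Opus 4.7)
The plan is to revisit the proof of Theorem \ref{thm 1} and replace the crude truncation argument, which introduced the superfluous $(\log n)^2$ factor, with a refined local-CLT analysis of the binomial point probability combined with integration against the (Poisson-like) law of $(K_0,R_0) := (N_2+N_3+\cdots,\, 2N_2+3N_3+\cdots)$. As in that proof, pick $x$ so that $kq = r$, where $q := p_1(x)/(p_0(x)+p_1(x))$. Using that $N_1 \mid (N_2,N_3,\ldots) \sim \mathrm{Bin}(k-K_0,q)$, and that the conditioning event $\sum_j X_j = r$ becomes $\{N_1 = r - R_0\}$, one has the exact identity
\begin{equation*}
\p(L_1=2) \;=\; \p_x(K_0=0)\cdot\frac{\mathrm{Bin}(k,q)(\{r\})}{\ex\bigl[\mathrm{Bin}(k-K_0,q)(\{r-R_0\})\bigr]},
\end{equation*}
and the two factors are treated separately.

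For $\p_x(K_0=0)$, careful Taylor expansion in $x$ of $M$ and of the $p_i(x)$ yields, under our choice of $x$, the sharp estimates $k\,p_2(x) = \lambda + O_t(1/\sqrt n)$ and $k\sum_{i\ge 3}p_i(x) = O_t(1/\sqrt n)$. Expanding $\log \p_x(K_0=0) = k\log\bigl(1-\sum_{i\ge 2}p_i\bigr)$ to second order (with the next correction $O(k(\sum p_i)^2) = O_t(1/n)$) then gives $\p_x(K_0=0) = e^{-\lambda}\bigl(1+O_t(1/\sqrt n)\bigr) = e^{-\lambda}\bigl(1+O_t(1/r)\bigr)$.

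For the binomial ratio, the target is $\ex[f(K_0,R_0)] = 1 + O_t(1/r)$, with $f(K_0,R_0) := \mathrm{Bin}(k-K_0,q)(\{r-R_0\})/\mathrm{Bin}(k,q)(\{r\})$. The key ingredient is a uniform Stirling expansion: for $R_0 \le r^{1/2 - \epsilon}$, direct manipulation of the ratio of falling factorials, using $kq = r$ so that the linear-in-displacement term in $\log f$ vanishes exactly, yields
\begin{equation*}
f(K_0,R_0) \;=\; \exp\!\left(-\frac{(R_0 - K_0 q)^2}{2r}\right)\bigl(1 + O(R_0^3/r^2) + O(K_0/k) + O(1/r)\bigr).
\end{equation*}
Taking expectations, and using that every moment $\ex[R_0^j]$ is $O_{t,j}(1)$ (since each $N_i$ is binomial with small mean, enjoys Chernoff tail bounds, and $\sum_{i\ge 2} i\,k p_i = O_t(1)$), one obtains $\ex[f] = 1 - \ex[(R_0-K_0 q)^2]/(2r) + O_t(1/r) = 1 + O_t(1/r)$ from the main region. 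The complementary tail $\{R_0 > r^{1/2-\epsilon}\}$ is controlled by the exponential tail bound on $R_0$ combined with the trivial bound $f \le O(\sqrt r)$, giving a contribution of $o(1/r)$.

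The main obstacle is the uniform Stirling expansion above: obtaining a genuine $O(1/r)$ relative error (rather than $O(1/\sqrt r)$) requires verifying that the apparent linear-in-displacement term in the expansion of $\log f$ cancels exactly under the saddlepoint-style choice $kq = r$. This rigidity is built into the choice of $x$, and once it is made explicit the remainder is routine integration. Beyond this, one must do bookkeeping to confirm each dropped term is $O_t(1/r)$ or smaller after integration, and to check that the truncation threshold $r^{1/2-\epsilon}$ is compatible with both the exponential tail bound on $R_0$ and the uniform validity of the Stirling expansion.
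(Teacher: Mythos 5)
You should first be aware that the statement you are proving is labeled a \emph{Conjecture} in the paper: the authors give no proof of \eqref{largest sharper 1}, only the heuristic in Remark \ref{guess} identifying the expected $O(1/r)$ correction terms for the finer Conjectures \ref{conj O2}--\ref{conj O3}. So there is no proof in the paper to compare against; what can be said is that your strategy is the natural sharpening of the proof of Theorem \ref{thm 1} that Remark \ref{big Oh 1} calls for, and that it is consistent with the authors' heuristic. Your exact identity $\p(L_1=2)=\p_x(K_0=0)/\ex[f(K_0,R_0)]$ is correct (given $r>0$, the event $L_1=2$ is exactly $\{K_0=0\}\cap\{N_1=r\}$, and the denominator $\p_x(\sum X_j=r)$ is exactly $\ex[\mathrm{Bin}(k-K_0,q)(\{r-R_0\})]$ by conditioning on $(N_2,N_3,\dots)$). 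The quadratic term $\ex[(R_0-K_0q)^2]/(2r)$ you extract from the local expansion matches the paper's first correction term $(2\lambda^2+\lambda)/r=\ex\binom{2N_2}{2}/r$, and your $kp_2(x)-\lambda$ discrepancy matches their second term $-2t^2\lambda/r$; this is good evidence you are on the right track.

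Two caveats. First, the step $kp_2(x)=\lambda(t,M)+O_t(1/\sqrt n)$ is not a consequence of the stated hypothesis $r\sim t\sqrt n$: if $r/\sqrt n\to t$ slowly (say $r=\sqrt n\,(t+1/\log n)$), then $kp_2-\lambda$ is of order $1/\log n$, not $1/\sqrt n$, and the conjectured relative error $O_t(1/r)$ fails for the \emph{stated} limit $\exp(-\lambda(t,M))$. Your argument genuinely proves $\p(L_1=2)=\exp(-kp_2(x))(1+O_t(1/r))$; to conclude \eqref{largest sharper 1} one must either assume $r=t\sqrt n+O(1)$ (as in Conjecture \ref{conj O2}, where $r=\lfloor t\sqrt n\rfloor$) or reinterpret $\lambda$ via $r^2/n$ in place of $t^2$. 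You should state this explicitly rather than inherit the imprecision. Second, the ``uniform Stirling expansion'' of $f(K_0,R_0)$, which you correctly identify as the crux, is asserted rather than proved; it is standard (expand $\log\bigl((r)_{R_0}/(kq)^{R_0}\bigr)$ and the corresponding factors in $k$ and $1-q$, and check that the linear-in-$R_0$ terms cancel under $kq=r$ up to $O(R_0/r)$), but until it is written out with explicit uniform error terms in both $K_0$ and $R_0$ over the truncation region, the argument remains a detailed sketch rather than a proof. With those two items supplied, your route would settle Conjecture \ref{conj O1}.
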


\begin{conjecture}\label{conj O2}
Under the hypotheses of Theorem \ref{thm 1},  the \emph{true} order of error in \eqref{largest sharper 1}
is order of $1/r$, in the sense that there is a function $C:  (0,\infty) \to \BR$, depending on $m_1,m_2,m_3$, such that
when $n\to \infty$ and $r = \lfloor t \sqrt{n} \rfloor$, we have 
\begin{equation}\label{largest sharper 2}
\p(L_1=2) =
  \exp(-\lambda(t,M))  + \frac{C(t)}{r } + o_t(1/r).  \end{equation}
 \end{conjecture}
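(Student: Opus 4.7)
The plan is to refine every step of the proof of Theorem \ref{thm 1} and Corollary \ref{cor 1}, replacing each $O$-bound by an explicit next-order term. Starting from the exact identity $\p(L_1 = 2) = N(n, \ba_*)/p(n,k)$ with $\ba_* = (n-2r, r, 0, 0, \ldots)$ and using \eqref{type 1} for the numerator, the task reduces to expanding $p(n,k)$ with relative error $o_t(1/r)$; equivalently, via \eqref{pnk T_n} at the saddlepoint $x$ of \eqref{good x}, to expanding $\p_x(X_1 + \cdots + X_k = r)$ to one order past what was needed in Theorem \ref{thm 1}, where $X_j := Y_j - 1$ as in \eqref{r target}.

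The core analytic step is a local central limit theorem with first Edgeworth correction for this sum under the $x$-distribution. Extending Lemma \ref{lemma small x}, the mean, variance, and third cumulant of $X$ are analytic functions of $x$ near $0$, whose leading coefficients are explicit rational expressions in $m_1, m_2, m_3, m_4$. A characteristic-function argument --- replacing the crude Hoeffding-plus-Stirling estimate used in Theorem \ref{thm 1} --- should yield an expansion of the form
$$\p_x(X_1 + \cdots + X_k = r) = \frac{1}{\sqrt{2\pi k\, \sigma^2(x)}} \Bigl( 1 + \frac{A(t,M)}{r} + o_t(1/r) \Bigr)$$
for an explicit $A$. In parallel, the bad events $B_2, B_3$ from the proof of Theorem \ref{thm 1} must be tracked quantitatively rather than discarded: components of size $\ge 5$ contribute $\e_x N_j = O(r^{2-j}) = o_t(1/r)$, but components of size $4$ (i.e.\ $N_3$ in the shifted indexing) have $\e_x N_3 \asymp 1/r$ with an explicit constant involving $m_4$, so they do contribute a genuine leading $1/r$ correction. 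This correction is computable by a size-biasing identity combined with the conditional local CLT, and yields an additive contribution $-\lambda_3(t,M)\, e^{-\lambda(t,M)}/r$ to $\p(L_1=2)$.

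The final step is to assemble (i) the Stirling/birthday expansion of $(n)_{2r}/n^{2r}$ past its leading factor $e^{-2t^2}$, (ii) the Edgeworth correction $A(t,M)/r$, (iii) the binomial-to-Poisson defect of $N_2 \sim \mathrm{Binomial}(k, p_2(x))$, which is $O(1/k) = O(1/r^2)$ and hence does \emph{not} contribute at this order, and (iv) the $N_3$ correction identified above. The main obstacle is bookkeeping rather than new ideas: each source must be written in a compatible parametrization, and the refined root $x$ of \eqref{good x} must be Taylor-expanded around the naive $x_0 = (2 m_1/m_2)(r/k)$ to one higher order than was needed before, since an $O(1/r)$ shift in $x$ produces $O(1/r)$ shifts in every downstream quantity. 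The resulting $C(t)$ should be a polynomial in $t$ whose coefficients are rational functions of $m_1, m_2, m_3, m_4$; as a sanity check, its $t \to 0$ asymptotics should reconcile with the sharp constants guaranteed by Theorem \ref{thm 1.5}, and its $t \to \infty$ asymptotics should be consistent with entering the regime of Theorem \ref{thm 2}.
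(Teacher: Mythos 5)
You should first be aware that \eqref{largest sharper 2} is stated in the paper as an open conjecture: the authors give no proof, only the heuristic accounting of Remark \ref{guess}, which identifies three sources of $1/r$ corrections (the displacement of the binomial target by $2N_2$, the shift in the saddle value of $x$, and the components of size $4$). Your roadmap tracks that heuristic quite faithfully --- you independently identify the same correction sources, including the crucial observation that $\e_x N_3 \asymp 1/r$ with a constant involving $m_4$ while $\e_x N_j = o(1/r)$ for $j \ge 4$, and that the binomial-to-Poisson defect for $N_2$ is $O(1/r^2)$ and hence harmless. So as a plan it is pointed in the right direction.

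But it is a plan, not a proof, and the places where you write ``should yield'' are exactly where the work lies. Two concrete gaps. First, the $1/r$ correction to $\p(L_1=2)$ does not come from an Edgeworth expansion of the single unconditional probability $\p_x(X_1+\cdots+X_k=r)$; it comes from the \emph{ratio} of point probabilities of Binomial$(k-k_0,p)$ at perturbed targets $r-2m-\ell$ to the point probability at $r$, uniformly over $m,\ell$ up to $O(\log n)$, summed against the (conditional) law of $(N_2,N_3,\dots)$. In that ratio the leading Gaussian factors cancel, and the surviving $1/r$ term is the $\binom{m-1}{2}/r$ of Remark \ref{guess}; your single displayed expansion does not capture this conditional structure, and an Edgeworth correction to the denominator alone would in any case have to be matched by the identical correction in the numerator of Corollary \ref{cor 1}'s ratio, so you must verify which corrections cancel and which survive. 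Second, your claimed additive contribution from $N_3$ is precisely the ``plausible belief'' of Remark \ref{guess}: to prove it you must show that the conditional law of $N_3$ given the event in \eqref{target via counts} agrees with its unconditional law to relative error $o(1)$ (not merely $O(1)$), which requires an independence-type argument of the kind the authors flag as the delicate step, not a crude overpowering of the conditioning. A minor further point: the conjecture as stated asserts $C$ depends only on $m_1,m_2,m_3$, whereas both your proposed $C$ and the authors' \eqref{best guess} involve $m_4$; any proof would have to resolve that discrepancy (almost certainly in favor of $m_4$ appearing).
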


\begin{conjecture}\label{conj O3}
 The function $C(\cdot)$ for use in \eqref{largest sharper 2}
 is given explicitly by
 \begin{equation}\label{best guess}
   C(t) =  (2 \lambda^2+\lambda)  - 2 t^2  \lambda(t,M)  - t^2 \frac{m_4}{4} \, \lambda. 
\end{equation}
 \end{conjecture}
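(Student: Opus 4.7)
The plan is to refine every step of the proof of Theorem~\ref{thm 1}, tracking all corrections of order $1/r \sim 1/(t\sqrt{n})$ rather than absorbing them into $O_t$. All three conjectures can be read as aspects of a single second-order expansion of the exact identity
\[
\p(L_1=2) = \frac{N(n,\ba^*)}{p(n,k)}, \qquad \ba^* = (n-2r,\ r,\ 0,\ 0,\ldots),
\]
used in the proof of Corollary~\ref{cor 1}: the numerator has the closed form \eqref{type 1}, while \eqref{pnk T} reduces the denominator to $\p_x(X_1 + \cdots + X_k = r)$ under the shift $X = Y-1$. Conjecture~\ref{conj O1} is the statement that the ratio admits such an expansion; Conjectures~\ref{conj O2}--\ref{conj O3} pin down the coefficient $C(t)$.

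The form of \eqref{best guess} strongly suggests a decomposition of $C(t)$ into three independent contributions, one from each source of error absorbed in the proof of Theorem~\ref{thm 1}. First, the Gaussian local CLT used for $\mathrm{Binomial}(k-k_0,\ p_1/(p_0+p_1))$ at its mean should be replaced by an Edgeworth expansion with two explicit terms, on the basis of a refined saddle $x = (2m_1 t/m_2)\, k^{-1/2} + c_1 k^{-1} + O(k^{-3/2})$ solving \eqref{good x} exactly; combined with the birthday-problem correction to $(n)_{2r}/n^{2r} = e^{-2t^2}(1 + O(1/r))$ inside \eqref{type 1}, I expect this mechanism to produce the term $-2t^2 \lambda(t,M)$, which is the only summand of \eqref{best guess} that is independent of $m_4$ and nonlinear in $t$. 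Second, the tail $3N_3 + 4N_4 + \cdots$ discarded in Theorem~\ref{thm 1} has $\ex(3N_3) = O(k x^3) = O(1/\sqrt{n})$, and is no longer negligible at scale $1/r$; retaining $N_3$ in the constraint $N_1 + 2N_2 + 3N_3 = r$ shifts the conditional mean of $N_2$ by an amount proportional to $(m_4/(24 m_1))\, x^3\, k$, and tracing this through the Poisson limit \eqref{expected at ell=1} is what I expect produces $-(t^2 m_4 / 4)\,\lambda$, the only $m_4$-dependent term; the factor $m_4/4 = 6 \cdot (m_4/4!)$ is consistent with a cubic Taylor coefficient of $M(x)/x$ entering through the third cumulant. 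Third, although $\dtv(N_2,\ \mathrm{Poisson}(\lambda)) = O(1/k) = O(1/n)$ unconditionally, the conditioning on $\sum X_j = r$ size-biases $N_2$ at scale $1/r$; I expect this to be the source of the $(2\lambda^2 + \lambda)$ term, extractable by computing $\ex[(N_2)_m \mid \sum X_j = r]/\ex (N_2)_m$ to order $1/r$ for the falling factorials and then summing against Poisson weights.

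The main obstacle is that these three mechanisms are coupled through the common saddle $x$ and the common Edgeworth expansion, so a direct approach risks producing $C(t)$ as a single tangled expression rather than the clean three-term sum in \eqref{best guess}. A systematic route is Fourier inversion,
\[
\p_x\!\Bigl(\sum\nolimits_j X_j = r\Bigr) \;=\; \frac{1}{2\pi}\int_{-\pi}^{\pi} \phi_X(\theta)^k \, e^{-i\theta r}\, d\theta,
\]
with $\log \phi_X$ expanded through four cumulants of the $x$-law of $X$ on the order-$k^{-1/2}$ neighbourhood of the origin: the first three cumulants drive the Edgeworth and size-biasing contributions of the first and third mechanism, while the fourth cumulant (proportional to $m_4 x^3 \asymp k^{-3/2}$) feeds the $N_3$ contribution of the second. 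Matching the resulting expansion of $p(n,k)$ against the closed form of $N(n,\ba^*)$, and verifying that the algebra settles into \eqref{best guess} exactly, is the finite but delicate computation that would promote Conjectures~\ref{conj O1}--\ref{conj O3} to theorems.
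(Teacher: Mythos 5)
You are addressing a \emph{conjecture}: the paper offers no proof of \eqref{best guess}, only the term-by-term heuristic of Remark \ref{guess}, which the authors themselves call ``a plausible attempt.'' Your proposal is likewise a program rather than a proof --- you say so explicitly in your last sentence --- so the honest verdict is that neither you nor the paper establishes the statement, and the Fourier/Edgeworth computation you defer is precisely the open content of Conjectures \ref{conj O1}--\ref{conj O3}. Your starting identity $\p(L_1=2)=N(n,\ba^*)/p(n,k)$ with \eqref{type 1} and \eqref{pnk T} is exact and matches the paper's route in Corollary \ref{cor 1}, and a four-cumulant expansion of $\log\phi_X$ is a plausible way to make everything rigorous.

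Where you diverge from Remark \ref{guess} is in which mechanism produces which term, and two of your attributions look wrong in a way that would derail the ``delicate computation.'' The paper obtains $(2\lambda^2+\lambda)$ from the relative shift $\binom{m-1}{2}/r$ of the binomial point mass when the target moves from $r$ to $r-2N_2$, averaged as $\e\binom{2N_2-1}{2}=2\lambda^2+\lambda$ under Poisson$(\lambda)$ --- this is essentially your size-biasing mechanism, so that term matches. But the paper obtains $-2t^2\lambda$ not from an Edgeworth correction or the birthday factor, rather from the fact that the saddle \eqref{good x 24} forces $\e_x N_2=\lambda\, n/(n-2r)=\lambda(1+2t^2/r+o(1/r))$, so that $\p(N_2=0)=e^{-\lambda}(1-2t^2\lambda/r+\cdots)$; and your claim that $-2t^2\lambda$ is ``the only summand independent of $m_4$ and nonlinear in $t$'' is empty, since $2\lambda^2+\lambda$ is also nonlinear in $t$. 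More seriously, the paper's $-t^2\frac{m_4}{4}\lambda$ comes from the \emph{direct exclusion} of outcomes with a component of size $4$: $\{L_1=2\}$ requires $N_3=N_4=\cdots=0$, and $\p(N_3=1\mid R(r))\asymp t^4/r$ is itself of order $1/r$. Your mechanism 2 (a shift in the conditional mean of $N_2$ caused by retaining $N_3$ in the constraint) is a different, subdominant bookkeeping and misses this leading $m_4$-effect as stated; in your exact-ratio formulation the effect reappears as the $p_3$-contribution to $\p_x(\sum_j X_j=r)$ in the denominator, which your cumulant expansion must capture, but the heuristic you give for why it yields exactly $-t^2\frac{m_4}{4}\lambda$ does not hold up. Finally, be aware that \eqref{largest sharper 2} is written as an additive expansion while Remark \ref{guess} speaks of relative error; any rigorous attempt must first resolve that ambiguity before the three contributions can be checked to be additive and exhaustive at order $1/r$.
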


\begin{remark}\label{guess}
The expression in \eqref{best guess}, albeit  highly technical, is a plausible attempt to name all the order of $1/r$ contributions to the relative error between $\p(L_1=2)$ and $  \exp(-\lambda(t,M))$.  We view $C(t)$
as a sum with three terms. 

 For the first term, $(2 \lambda^2+\lambda)$, consider outcomes where $L_1 \le 3$;  these are
($N_2=j,N_1=r-2j,N_0=k-r-j$), for $j=0,1,2,\dots$.  In the second paragraph of the proof of Theorem \ref{thm 1}, these correspond to the situation where $k_0=j$, the conditional distribution of $N_1$ is exactly Binomial($k-j,r/k)$, and the target value is $r-2j$. The relative error between  Binomial($k-j,r/k)[r-2j]$ and Binomial($k,r/k)[r-2j]$ is $O(1/k)=o(1/r)$, negligible here.    For Binomial($k,r/k)$,  the relative difference between the mass at $r$ and the mass at $r-m$, i.e., between  Binomial($k,r/k)[r]$ and  Binomial($k,r/k)[r-m]$,  is $ 1-(m)_r/m^r  + o(1/r) = {m-1 \choose 2}/r + o(1/r)$.  We use this with  $m=2j = 2 N_2$.  Under the Poisson approximation where the distribution of $N_2$ is taken to be Poisson($\lambda$), we have $\e {2 N_2 -1 \choose 2} = 2 \lambda^2+\lambda$.

For the second term,  consider that the choice used for $x$ in the proof of Theorem \ref{thm 1},  which is described even more explicitly by \eqref{good x 24},   leads to $\lambda' := \e N_2  = n/(n-2r) \times \lambda$.  The relative error
in approximating  $\exp(-\lambda')$ by $\exp(-\lambda)$  is $2 t^2  \lambda /r + o(1/r)$.   

For the third term,  consider the event $N_3=1$,  corresponding to the assembly having exactly one component of size 4.     In Remark \ref{perspective},  the most likely representative of this event is described by $\ba'''$,  leading to the plausible belief that,  with the event in \eqref{target via counts} denoted as $G$,
 
$$
\frac{\p(N_3=1,  G)}{ \p( G)}  
\sim  \frac{N(n,\ba''') }{N(n,\ba) }  \sim  \frac{ m_4}{4} \, \frac{r}{n}  \sim  t^2 \frac{m_4}{4} \, \lambda \ \frac{1}{r}.
$$
\end{remark}

 \begin{remark}\label{perspective}  To give perspective on the meaning and extent of sharpness of the upcoming Theorem \ref{thm 1.5}, we consider four particular partition types for an assembly of size $n$ to have rank $r$.  In each case, we describe the rank $r$
partition of $n$  first by its counts $\ba$,  then via the notation $\lambda = 1^{a_1}2^{a_2}\dots$, and finally by the copartition $\overline{\lambda}$ as described by \eqref{copartition} and \eqref{copartition 2}.   The first type is familiar from the proof of Corollary \ref{cor 1}.
\[\begin{array}{llllll}
\ba&=(n-2r,r,0,\dots) & \lambda&=1^{n-2r}2^r & \overline{\lambda} &= 1^r  \\
\ba'&=(n-2r+1,r-2,1,0,\dots) & \lambda'&=1^{n-2r+1}2^{r-2}3^1 & \overline{\lambda'} &= 1^{r-2}2^1  \\
\ba''&=(n-2r+2,r-4,2,0,\dots) & \lambda''&=1^{n-2r+2}2^{r-4}3^2 & \overline{\lambda''} &= 1^{r-4}2^2 \\
\ba'''&=(n-2r+2,r-3,0,1,0,\dots) & \lambda'''&=1^{n-2r+2}2^{r-3}4^1 & \overline{\lambda'''} &= 1^{r-3}3^1  
\end{array}\]
The exact count of how many $M$-assemblies have type $\ba$, the first case in the list above, is given in \eqref{type 1}.
The corresponding exact counts for the next three cases are

\begin{equation}\label{type 2}
N(n,\ba') = 
\frac{(n)_{2r-1} \, m_1^{n-2r+1}  \, m_2^{r-2} m_3}{(r-2)! \, 2^{r-2} \, 3!}.
\end{equation}

\begin{equation}\label{type 3}
N(n,\ba'') = 
\frac{(n)_{2r-2} \, m_1^{n-2r+2}  \, m_2^{r-4}m_3^2}{(r-4)! \, 2^{r-4} 2! \, (3!)^2 }.
\end{equation}

\begin{equation}\label{type 4}
N(n,\ba''') =  
\frac{(n)_{2r-2} \, m_1^{n-2r+2}  \, m_2^{r-3}m_4}{(r-3)! \, 2^{r-3} \, 4!}.
\end{equation}

Considering the ratios of each of the above three with $N(n,\ba)$, which for $\ba=(n-2r,r,0,\dots)$ is the exact count of $M$-assemblies of rank $r$ and with $L_1=2$, for any $r \ge 1$,  we see that 
\begin{align*}
\frac{N(n,\ba') }{N(n,\ba) } & = \frac{ m_1 \, (r)_2 \, 2^2 \, m_3}{(n-2r+1) m_2^2 \, 3!}, \\ \\
\frac{N(n,\ba'') }{N(n,\ba) } & = \frac{ m_1^2 \, (r)_4  \, 2^4 \, m_3^2 \,  }{(n-2r+1)(n-2r+2) (3!)^2 \,m_2^4}, \\ \\
\frac{N(n,\ba''') }{N(n,\ba) } & = \frac{ m_1^2 \, (r)_3 \, 2^3\, m_4}{(n-2r+1)(n-2r+2) m_2^3 \, 4!}. 
\end{align*}
In particular,  if $r\to \infty$ and $r=o(n)$ then,  with the symbol  $\asymp$ used to mean that the ratio is bounded away from zero and infinity, we have 
\begin{equation}\label{perspective 2}
\frac{N(n,\ba') }{N(n,\ba) } \sim \frac{ 2 m_1 \, m_3}{3 m_2^2}  \ \frac{r^2}{n} \asymp \frac{r^2}{n}, \qquad 
\frac{N(n,\ba'') }{N(n,\ba) }   \asymp \frac{r^4}{n^2},  \qquad  \frac{N(n,\ba''') }{N(n,\ba) }   \asymp \frac{r^3}{n^2}.
\end{equation}
This will show that the error bound in Theorem  \ref{thm 1.5} is sharp.   It hints at the job of Lemma
\ref{lemma 23},  which is to compare  $N(n,\ba)$ with the combined  count of  all rank $r$ assemblies of size $n$  having only  parts of size 1,2, and 3,  %like $\ba'$ and $\ba''$,   % plenty awkward sentence even after leaving this out
by direct combinatorial argument.  
And it gives perspective to Lemma \ref{lemma 24},  which uses the saddle point approximation to give an upper bound on all cases, like $\ba'''$, 
involving at least one part of size 4 or greater.
\end{remark}

\begin{lemma}\label{lemma 23}
Consider $M$-assemblies with $m_1,m_2 % ,m_3
 > 0$.   Assume $0 < r <n/2$ and let 
\begin{equation}\label{yrn}
y=
\frac{ 2 m_1 \, m_3}{3 m_2^2}  \ \frac{r^2}{n-2r}.
\end{equation}
Picking uniformly from the $p(n,n-r)$  $M$-assemblies of rank $r$, we have 
$$
\p(L_1= 3)   \le e^{y}-1.
$$
\end{lemma}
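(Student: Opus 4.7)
The plan is a clean combinatorial comparison of type counts, using $N_0=N(n,\ba)$ with $\ba=(n-2r,r,0,\ldots)$ as the "denominator" to which all other relevant types are compared.

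First I would parameterize the event $\{L_1=3\}$. Since the parts must have sizes in $\{1,2,3\}$ and the rank is $r=n-k$, if $j\ge 0$ denotes the number of components of size $3$, then the number of components of size $2$ must be $r-2j$ and the number of components of size $1$ must be $n-2r+j$; the type is
\[
\ba_j:=(n-2r+j,\,r-2j,\,j,\,0,0,\ldots),\qquad 0\le j\le \lfloor r/2\rfloor,
\]
and $L_1=3$ corresponds exactly to $j\ge 1$. Write $N_j:=N(n,\ba_j)$, so $N_0$ is as in \eqref{type 1}.

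Next I would compute $N_j/N_0$ directly from \eqref{assembly count}, following the pattern already exhibited for $j=1,2$ in Remark~\ref{perspective}. The powers of $m_1,m_2,m_3$, the ratios $(n-2r)!/(n-2r+j)!$ and $r!/(r-2j)!$, and the factors $2^{2j}/(6^j j!)$ combine to
\[
\frac{N_j}{N_0}=\frac{(r)_{2j}}{j!}\cdot\left(\frac{2m_1m_3}{3m_2^2}\right)^{\!j}\cdot\prod_{i=1}^{j}\frac{1}{n-2r+i}.
\]
This is just bookkeeping, and the $j=1$ case agrees with \eqref{perspective 2}.

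Now apply the two trivial bounds $(r)_{2j}\le r^{2j}$ and $(n-2r+i)^{-1}\le(n-2r)^{-1}$ (valid for $i\ge 1$ since $n-2r>0$), giving
\[
\frac{N_j}{N_0}\le\frac{1}{j!}\left(\frac{2m_1m_3}{3m_2^2}\cdot\frac{r^2}{n-2r}\right)^{\!j}=\frac{y^j}{j!}.
\]
Summing over $j\ge 1$ yields $\sum_{j\ge 1}N_j/N_0\le e^y-1$. Finally, since $\ba_0$ is one valid type contributing to $p(n,n-r)$ and $m_1,m_2>0$ guarantee $N_0>0$, we have $p(n,n-r)\ge N_0$, and therefore
\[
\p(L_1=3)=\frac{\sum_{j\ge 1}N_j}{p(n,n-r)}\le\frac{1}{N_0}\sum_{j\ge 1}N_j\le e^y-1.
\]

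The "hard part" is really just the factorial bookkeeping in the second step, and neither the saddle-point machinery nor the Poisson coupling of Section~\ref{sect Poisson} is needed here — the lemma is a Bonferroni-style ratio bound against the single dominant type $\ba_0$. If $m_3=0$ then both sides are $0$, so the restriction $m_3>0$ need not be imposed. The point of phrasing the bound as $e^y-1$ (rather than $y$) is precisely that the sum includes all $j\ge 1$, providing the uniform-in-$r$ control needed for Theorem~\ref{thm 1.5}.
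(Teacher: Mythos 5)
Your proof is correct and is essentially the same as the paper's: both parameterize the event $\{L_1=3\}$ by the types $\baj{j}=(n-2r+j,r-2j,j,0,\dots)$ with $j\ge 1$, bound each ratio $N(n,\baj{j})/N(n,\baj{0})$ by $y^j/j!$ via $(r)_{2j}\le r^{2j}$ and $(n-2r+j)_j\ge (n-2r)^j$, and sum against the lower bound $p(n,n-r)\ge N(n,\baj{0})$. Your bookkeeping matches \eqref{assembly count} and the ratio computed in the paper's proof, so there is nothing to add.
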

\begin{proof}
Let
$$
\baj{j} :=  (n-2r+j,r-2j,j,0,0,\dots,0),
$$
so that the rank $r$ partitions $\ba,\ba',$ and $\ba''$ in Remark \ref{perspective}
\emph{are exactly} $\baj{j}$ for $j=0,1,2$.   Since $r>0$,  we cannot have $L_1=1$, and the event $(L_1 \le 3)$
is precisely the event %that
 ($\bD(n,n-r)=\baj{j}$ for some $j$ with $0 \le j \le r/2$).   As in the proof of Corollary \ref{cor 1},
 the event $(L_1=2)$ is precisely the event $(\bD(n,n-r)=\baj{0}$).

From \eqref{assembly count} we have, for $0 \le j \le r/2$,
$$
N(\baj{j},n) = n!  \frac{ m_1^{n-(2r-j)} m_2^{r-2j} m_3^j}{(n-(2r-j))! \, 2^{r-2j} \,(r-2j)! \,(3!)^j \, j!}
$$
so that 
$$
\frac{N(\baj{j},n)}{N(\baj{0},n)}
 = \frac{m_1^{j}\, (r)_{2j} \, 2^{2j}\, m_3^j }{(n-2r+j)_j \, (3!)^j \,m_2^{2j} \, j!} \le y^j/j!,
$$
hence
$$
\p(L_1 = 3) \le \sum_{1 \le j \le r/2}  y^j/j!  \ \le e^y -1. 
$$
\end{proof}

In the next lemma, our goal is to give a completely effective lower bound on the probability of the event in \eqref{r target},  in a way that gives an asymptotically useful bound for the situation with $r=o(\sqrt{n})$.
Our saddle choice for the value of the parameter $x$ for use in \eqref{dist X} is determined by the requirement
$$
   k \frac{p_1(x)}{p_0(x)+p_1(x)} = r, \text{ equivalently }  \frac{x}{2 m_1/m_2 + x} = \frac{r}{n-r},
$$  
equivalently
\begin{equation}\label{good x 24}
x = \frac{2 m_1 r  }{m_2(n-2r)   }. 
\end{equation}
Observe that in any low rank regime, that is, whenever $r=o(n)$, 
we have $x \sim (2m_1/m_2) \, r/n$. 
We take
\begin{equation}\label{rho}
   \rho := \sup_{i \ge 3}  \left( \frac{m_i}{i!} \right)^{1/i},
\end{equation}
noting that the assumption that $M$ has strictly  positive radius of convergence is equivalent to the condition  that $\rho < \infty$.   Observe that when $r=o(n)$  the left side of \eqref{needed hyp}
is $ 2 k  \rho^3x^2/m_1  \sim (2 \rho^3/m_1)  \ n x^2 \sim (2 \rho^3/m_1)   \ r^2/n$,   so the condition that  $r=o(\sqrt{n})$ is sufficient to guarantee that  \eqref{needed hyp} holds \emph{eventually}.
Finally, observe that for the situation of interest, which is  $r=o(\sqrt{n})$,  the r.h.s.~of \eqref{24 bound} is order of $k x^3 \sqrt{r} \sim n (r/n)^3 \sqrt{r} = r^{-1/2} (r^2/n)^2$, so compared with the error contribution from Lemma \ref{lemma 23},  the error contribution from Lemma \ref{lemma 24} is of smaller order.

\begin{lemma}\label{lemma 24}
Consider an assembly as governed by \eqref{def M} and \eqref{exponential relation}, and assume further that $m_1,m_2>0$ and $M(\cdot)$ has a strictly positive radius of convergence.
Given $n,r=n-k$  let the parameter in \eqref{dist X} be given by \eqref{good x 24},
and let $\rho$ be given by \eqref{rho}.
Assume that $n,r$ satisfy
\begin{equation}\label{24 hyp}
   x \rho \le 1/2.
\end{equation}  
and
\begin{equation}\label{needed hyp}
\frac{2 k \, \rho^3x^2}{m_1 }  \le 1/2.
\end{equation}
Then, with $c_0 :=  e/\sqrt{2 \pi}$,
$$
\p(X_1+\dots+X_k = r) \ge  \frac{1}{2 c_0 \sqrt{2 \pi r}}
$$
and
\begin{equation}\label{3 bound}
  \p(  N_3+N_4+\dots>0) \le k \, \p(X \ge 3)  \le k  \frac{2 \rho^4x^3}{m_1 }
\end{equation}
and hence 
\begin{eqnarray}\label{24 bound}
 \p(L_1 \ge 4) %&=& \p(N_3+N_4+\dots>0 | X_1+\dots+X_k = r) \\
   \le   k  \frac{2 \rho^4x^3}{m_1 } \
 2 c_0 \sqrt{2 \pi r}  =:  u_4(n,r).
\end{eqnarray}
\end{lemma}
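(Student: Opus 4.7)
The plan is to establish the three displayed inequalities of the lemma in the order they appear, and then deduce \eqref{24 bound} by combining them with an elementary observation about the Poisson-process coupling.

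\textbf{Step 1: Tail bound on $X$.} Starting from \eqref{dist X}, which gives $p_i = m_{i+1} x^{i+1}/((i+1)!\,M(x))$, I would use $M(x) \geq m_1 x$ (which follows from $m_1 > 0$ and $m_j \geq 0$) together with $m_j x^j/j! \leq (\rho x)^j$ for $j \geq 3$ from the definition of $\rho$. Summing the resulting geometric tail yields
\begin{equation*}
\p_x(X \geq 3) \;\leq\; \frac{1}{m_1 x}\sum_{j \geq 4} \frac{m_j x^j}{j!} \;\leq\; \frac{1}{m_1 x}\cdot\frac{(\rho x)^4}{1-\rho x} \;\leq\; \frac{2\rho^4 x^3}{m_1},
\end{equation*}
the last step using hypothesis \eqref{24 hyp}. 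The union bound over the i.i.d.\ copies $X_1,\ldots,X_k$ then gives \eqref{3 bound}.

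\textbf{Step 2: Lower bound on $\p_x(X_1 + \cdots + X_k = r)$.} The idea is to restrict attention to the event $A := \{X_j \in \{0,1\}\text{ for all }j\}$. The calibration \eqref{good x 24} was chosen precisely so that $p_1/(p_0+p_1) = r/(n-r) = r/k$, so conditional on $A$ the sum $X_1+\cdots+X_k$ is Binomial$(k,r/k)$. Hence, with $q := p_0 + p_1 = 1 - \p_x(X \geq 2)$,
\begin{equation*}
\p_x(X_1 + \cdots + X_k = r) \;\geq\; q^k\binom{k}{r}\Bigl(\tfrac{r}{k}\Bigr)^r\Bigl(\tfrac{k-r}{k}\Bigr)^{k-r}.
\end{equation*}
The same geometric-tail argument as in Step 1 yields $\p_x(X \geq 2) \leq 2\rho^3 x^2/m_1$, so by Bernoulli's inequality and \eqref{needed hyp}, $q^k \geq 1 - 2k\rho^3 x^2/m_1 \geq 1/2$. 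For the binomial point-at-the-mean probability I would invoke Stirling's formula in the form $n! \leq c_0 \sqrt{2\pi n}(n/e)^n$ (equivalently $n! \leq e\sqrt{n}(n/e)^n$), valid for $n \geq 1$, together with $n! \geq \sqrt{2\pi n}(n/e)^n$. After the $(k/e)^k$ factor cancels against $(r/k)^r((k-r)/k)^{k-r}$, one obtains
\begin{equation*}
\binom{k}{r}\Bigl(\tfrac{r}{k}\Bigr)^r\Bigl(\tfrac{k-r}{k}\Bigr)^{k-r} \;\geq\; \frac{1}{c_0\sqrt{2\pi r(k-r)/k}} \;\geq\; \frac{1}{c_0\sqrt{2\pi r}},
\end{equation*}
and combining with $q^k \geq 1/2$ delivers the advertised lower bound.

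\textbf{Step 3: Putting it together.} In the coupling from Section~\ref{sect Poisson}, $\bD(n,k)$ is distributed as $(N_1,N_2,\ldots)$ conditional on $X_1 + \cdots + X_k = r$, and the event $\{L_1 \geq 4\}$ translates to $\{\max_j X_j \geq 3\} = \{N_3 + N_4 + \cdots > 0\}$. Therefore
\begin{equation*}
\p(L_1 \geq 4) \;=\; \frac{\p_x(\max_j X_j \geq 3,\ X_1 + \cdots + X_k = r)}{\p_x(X_1 + \cdots + X_k = r)} \;\leq\; \frac{k\,\p_x(X \geq 3)}{\p_x(X_1 + \cdots + X_k = r)},
\end{equation*}
which on plugging in \eqref{3 bound} and the lower bound from Step 2 yields \eqref{24 bound}.

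The main delicate point is the Stirling calculation in Step 2: matching the leading constant exactly to $c_0$ (rather than $c_0^2$ or $e^{1/6}$) requires careful selection of which Stirling form to apply to which factorial, and possibly absorbing a Stirling-correction factor of the form $e^{O(1/r) + O(1/(k-r))}$ into the factor of $1/2$ coming from $q^k$ (hypothesis \eqref{needed hyp} is somewhat stronger than needed for $q^k \geq 1/2$ alone, leaving room for this). The tail bounds in Steps 1 and 3 are routine geometric-series and union-bound arguments.
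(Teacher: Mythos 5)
Your proposal is correct and follows essentially the same route as the paper: the same restriction to the event $\{N_0=k-r,\,N_1=r\}$ with $(p_0+p_1)^k\ge 1/2$ via Bernoulli and \eqref{needed hyp}, the same geometric-tail bounds from $\rho$ and \eqref{24 hyp}, and the same ``overpowering the conditioning'' step for \eqref{24 bound}. The one delicate point you flag — getting the constant $c_0$ rather than $c_0^2$ in the binomial point-at-the-mean bound — is simply asserted without proof in the paper, and your fix works: apply the Robbins-type upper bound $r!\le e\sqrt{r}\,(r/e)^r$ to $r!$ and the lower bound $\sqrt{2\pi k}\,(k/e)^k\le k!$ to $k!$, with the factor $\sqrt{k/(k-r)}\ge 1$ absorbing the remaining correction from $(k-r)!$.
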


\begin{proof}
Using \eqref{rho} and the assumption that $x \rho \le 1/2$,
we have
$$
\sum_{i \ge 3}  \frac{m_i x^i}{i!} \le \sum_{i \ge 3}  (\rho x)^i = \frac{(\rho x)^3 }{1-\rho x} \le 2 \rho^3 x^3.
$$
Hence in \eqref{dist X}, with this choice of $x$,  we have
$$
p_0(x)+p_1(x) = \frac{m_1 x + m_2 \, x^2/2}{M(x)}> \frac{m_1x}{m_1x + 2 \rho^3 x^3} 
=\frac{m_1}{m_1 + 2 \rho^3 x^2} 
$$
so that 
\begin{equation}\label{2 bound}
\p(X \ge 2)  = 1-(p_0+p_1) \le \frac{2 \rho^3x^2}{m_1 + 2 \rho^3 x^2} \le \frac{2 \rho^3x^2}{m_1 }. 
\end{equation}
This yields
\begin{equation}\label{bound 1}
(p_0+p_1)^k = (1-\p(X \ge 2))^k  \ge 1-k \, \p(X \ge 2)  \ge 1-\frac{2 k \, \rho^3x^2}{m_1 }.
\end{equation}
With counts $N_i$,  for $i \ge 0$, as specified just before \eqref{target via counts}, and recalling that $k=n-r$, and writing $p := p_1/(p_0+p_1)$ so that by \eqref{good x 24} we have  also $p=r/k$, we have
$$
\p(N_0=n-2r,N_1=r) = \frac{k!}{(k-r)! r!} \ p_0^{k-r}p_1^r = 
\frac{k!}{(k-r)! r!} \ (1-p)^{k-r}p^r \ (p_0+p_1)^k.
$$
The first factor on the r.h.s.~above is a point probability for a binomial distribution with mean $r$,
asymptotically $1/\sqrt{2 \pi r (1-p)}$,  and always at least $1 /(c_0 \sqrt{2 \pi r})$,  where 
$c_0 := e^1/\sqrt{2 \pi}  = 1.0844375514192 \dots$. 
The second factor on the right side above is bounded via 
\eqref{bound 1} 
%--- we take, as a hypothesis for this lemma,  whatever condition we need to control this (perhaps to make it at least 1/2).
and the hypothesis \eqref{needed hyp},
hence
$$
\p(X_1+\dots+X_k=n) \ge \p(N_0=n-2r,N_1=r)   \ge \frac{1}{2 c_0 \sqrt{2 \pi r}}.
$$

Using \eqref{rho} and the assumption that $x \rho \le 1/2$,
we have
$$
\sum_{i \ge 4}  \frac{m_i x^i}{i!} \le \sum_{i \ge 4}  (\rho x)^i = \frac{(\rho x)^4 }{1-\rho x} \le 2 \rho^4 x^4,
$$
and $M(x) \ge m_1 x$,  hence 
\begin{eqnarray*}
\p(N_3+N_4+\dots>0) & \le & k \, \p(X \ge 3) \\
   & = &  \frac{k}{M(x)}   \sum_{i \ge 4}  \frac{m_i x^i}{i!} \le \frac{ 2 k \rho^4 x^4}{m_1 x}.
\end{eqnarray*}
To prove \eqref{24 bound} we \emph{overpower} the requirement for the occurrence of the conditioning event:
\begin{eqnarray*}
 \p(L_1 \ge 4) &=& \p(N_3+N_4+\dots>0 | X_1+\dots+X_k = r) \\
 & = & \frac{\p(N_3+N_4+\dots>0 \emph{ and } X_1+\dots+X_k = r)}{\p( X_1+\dots+X_k = r)} \\
  & \le & \frac{\p(N_3+N_4+\dots>0 )}{\p( X_1+\dots+X_k = r)} \\
  & \le  & k  \frac{2 \rho^4x^3}{m_1 } \
 2 c_0 \sqrt{2 \pi r}.
\end{eqnarray*}

\end{proof}

\begin{theorem}\label{thm 1.5}
Consider an assembly as governed by \eqref{def M} and \eqref{exponential relation}. Given $n,r$, with $k:=n-r$ pick an assembly uniformly from the $p(n,k)$ choices having exactly $k$ components.

Assume the hypotheses of Lemmas \ref{lemma 23} and \ref{lemma 24}, i.e.,  $m_1,m_2>0$, $M(\cdot)$ has a strictly positive radius of convergence,  $0<r < n/2$, and that $x$ and $\rho$ as given by \eqref{good x 24} and \eqref{rho} satisfy \eqref{24 hyp}
and \eqref{needed hyp}.  

Then,  with $y \equiv y(n,r)$ given by \eqref{yrn}  and $u_4$ given by \eqref{24 bound},
$$
\p(L_1 \ge 3) \le  (e^y-1) + u_4(n,r)  =: z.
 $$
 Hence, in case $z<1$,
\begin{equation}\label{effective p(n,k)}
\frac{n^{2r} \, m_1^{n-2r}  \, m_2^r}{r! \, 2^r}    \le p(n,k) \le \frac{n^{2r} \, m_1^{n-2r}  \, m_2^r}{r! \, 2^r}  
\biggl(1+   \frac{z}{1-z}  %e^y + u_4(n,r)
  \biggr).
 \end{equation}  
Note that when  $r=o(\sqrt{n})$, the upper bound on the relative error is  
\[ z/(1-z) \sim z \sim y \sim 2 m_1 m_3/(3 m_2^2)  \,  r^2/n \asymp r^2/n.\]
 \end{theorem}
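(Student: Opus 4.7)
The plan is to combine Lemmas \ref{lemma 23} and \ref{lemma 24} via the tautology that $\p(L_1 \ge 2) = 1$ whenever $r \ge 1$, together with the explicit identification of $\p(L_1 = 2)$ with a ratio whose denominator is $p(n,k)$.

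First I would write the partition of certainty
$$1 = \p(L_1 = 2) + \p(L_1 = 3) + \p(L_1 \ge 4),$$
valid because $r > 0$ forces at least one component of size $\ge 2$. Lemma \ref{lemma 23} gives $\p(L_1 = 3) \le e^y - 1$ and Lemma \ref{lemma 24} gives $\p(L_1 \ge 4) \le u_4(n,r)$, so adding yields the two-sided bound
$$1 - z \ \le \ \p(L_1 = 2) \ \le \ 1,$$
with $z := (e^y - 1) + u_4(n,r)$ as defined in the statement.

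Next I invoke the observation used in the proof of Corollary \ref{cor 1}: the only type $\ba$ of rank $r$ with $L_1 = 2$ is $\ba = (n-2r, r, 0, \dots, 0)$, and this contributes
$$N(n, \ba) = \frac{(n)_{2r} \, m_1^{n-2r} \, m_2^r}{r! \, 2^r}$$
by \eqref{type 1}, so picking uniformly from $p(n,k)$ gives $\p(L_1=2) = N(n,\ba)/p(n,k)$. Substituting the bracket above and solving for $p(n,k)$ produces
$$N(n, \ba) \ \le \ p(n,k) \ \le \ \frac{N(n, \ba)}{1-z} = N(n, \ba)\left(1 + \frac{z}{1-z}\right),$$
which is precisely \eqref{effective p(n,k)} after using $(n)_{2r} \le n^{2r}$ on the upper bound (and noting that in the regime of interest $r = o(\sqrt n)$ the ratio $(n)_{2r}/n^{2r} = 1 + O(r^2/n)$ is $1 + o(1)$, so the same replacement in the lower bound is only a second-order loss).

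There is no serious obstacle beyond the two lemmas, which already do all the real work: Lemma \ref{lemma 23} controls the nearby type $L_1 = 3$ by a direct combinatorial comparison with $N(n, \ba)$, while Lemma \ref{lemma 24} controls the distant cases $L_1 \ge 4$ by overpowering the conditioning event $\{X_1 + \cdots + X_k = r\}$ using the saddle-point lower bound on its probability. The concluding estimate $z/(1-z) \sim z \sim y \asymp r^2/n$ when $r = o(\sqrt n)$ then follows directly from the formula \eqref{yrn} for $y$ and the observation made just before Lemma \ref{lemma 24} that $u_4(n,r)$ is of smaller order than $y$ in that regime.
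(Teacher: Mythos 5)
Your proposal is correct and is essentially the paper's own proof: the authors' argument is exactly the "immediate combination" of Lemmas \ref{lemma 23} and \ref{lemma 24} via $1=\p(L_1=2)+\p(L_1=3)+\p(L_1\ge 4)$, followed by the Corollary \ref{cor 1} identity $\p(L_1=2)=N(n,\ba)/p(n,k)$ and the sharpness observation from \eqref{perspective 2}. The one wrinkle you rightly flag — that $(n)_{2r}\le n^{2r}$ only helps the upper bound, so the lower bound in \eqref{effective p(n,k)} as printed with $n^{2r}$ does not literally follow from $N(n,\ba)\le p(n,k)$ — is an issue with the statement rather than with your argument, and the paper's own one-line proof passes over it in the same way.
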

\begin{proof}
The first statement is an immediate combination of the conclusions of Lemmas \ref{lemma 23} and \ref{lemma 24};  the second statement follows by reasoning akin to that used in the proof of Corollary \ref{cor 1}.  The asymptotic analysis was given in the paragraph preceeding Lemma \ref{lemma 24},   and the calculation in \eqref{perspective 2} shows that the upper bound is asymptotically sharp.
 \end{proof}

\begin{theorem}\label{thm 2}
Consider an assembly as governed by \eqref{def M} and \eqref{exponential relation}, and assume further that $m_1,m_2,\ldots>0$ and $M(\cdot)$ has a strictly positive radius of convergence.
Fix  a sequence $k(1),k(2),\dots \ge 1$ with $1 \le k(n) \le n$.  Given $n$, pick an assembly uniformly from the $p(n,k)$ choices having exactly $k$ components.   Write $r=n-k$.
Assume that for some $\varepsilon > 0$, $r=o(n^{1-\varepsilon})$. 

 Then 
for $\ell=1,2,\ldots$,  
\begin{itemize}

\item If $r=o(n^{\ell/(1+\ell)})$ then $\p(L_1 \le \ell+1) \to 1$.

\item If $n^{\ell/(1+\ell)} =o(r)$ then $\p(L_1 > \ell+1) \to 1$.
  
\item  If  $\liminf \log_n r > \ell/(\ell+1)$, then with $x$ given by \eqref{good x},  for each $i$ with
$1 \le i \le \ell+2$,   
\begin{equation}\label{bullet 3}
1 = \p\left(D_i(n,k) \sim    k  \ \frac{m_i x^{i-1} }{m_1 \, i!}  \right).
\end{equation}
  
\item If for fixed $t>0$ we have $r \sim  t \, n^{\ell/(1+\ell)} $  then,  with 
\begin{equation}\label{lambda ell}
\lambda \equiv \lambda(t,\ell,M)  :=  \frac{2^{\ell+1} m_1^\ell  \, m_{\ell+2}}{ (\ell+2)! \, m_2^{\ell+1}} \ t^{\ell+1} ,
\end{equation}
\begin{equation}\label{ell border}
\p(L_1 = \ell+1) \to e^{-\lambda} \text{  and }   \p(L_1 = \ell+2) \to 1-e^{-\lambda}.
\end{equation}
\end{itemize}
 
\end{theorem}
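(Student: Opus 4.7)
The plan is to parallel the argument for Theorem~\ref{thm 1}, using the shifted conditioning relation \eqref{CRY T}: with $X_j := Y_j - 1$ and $Y_j$ i.i.d.~from \eqref{Y}, the distribution of $\bD(n,k)$ equals the conditional law of the counts $N_i := \#\{j \le k : X_j = i\}$, where $N_{i-1}$ plays the role of $D_i(n,k)$, given the event $A := \{X_1 + \cdots + X_k = r\}$. I would take $x$ via the saddle condition $k p_1(x) = r$ of \eqref{good x}, so by Lemma~\ref{lemma small x} we have $x \sim (2 m_1/m_2)(r/k)$. The small-$x$ expansion $p_i(x) \sim (m_{i+1}/m_1) x^i / (i+1)!$ then yields the three-tier picture driving the proof: in the critical regime $r \sim t n^{\ell/(\ell+1)}$, $k p_i \to \infty$ polynomially for $1 \le i \le \ell$, $k p_{\ell+1} \to \lambda(t,\ell,M)$ (a direct calculation substituting $(r/n)^{\ell+1} \sim t^{\ell+1} n^{-1}$ into $k \cdot (m_{\ell+2}/m_1) (2 m_1 r/(m_2 k))^{\ell+1}/(\ell+2)!$), and $\sum_{i \ge \ell+2} k p_i = O(k x^{\ell+2}) = o(1)$ by the geometric tail bound from \eqref{rho}.

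The main analytic step is a uniform local CLT paralleling the second paragraph of the proof of Theorem~\ref{thm 1}. Conditioning on $K_0 := \sum_{i \ge \ell+1} N_i = k_0$ (typically $O(1)$) and on \emph{which} $k_0$ indices $j \in [k]$ satisfy $X_j \ge \ell+1$, the remaining $k - k_0$ indices are i.i.d.~with distribution $X \mid X \le \ell$, taking values in $\{0, 1, \ldots, \ell\}$ with probabilities proportional to $(p_0, p_1, \ldots, p_\ell)$. This restricted variable has mean and variance both $\sim p_1 \sim r/k$, since $p_1$ dominates the higher moments as $x \to 0$. The local CLT---via Stirling on the multinomial structure, or via characteristic function estimates for bounded lattice variables uniform in $x \to 0$---then gives that the probability the restricted sum equals $r - R_0$ (where $R_0 := \sum_{i \ge \ell+1} i N_i = O(1)$ from the large-$X$ indices) is asymptotic to $1/\sqrt{2\pi r}$, with relative error controlled by the perturbation sizes $k_0$ and $R_0$. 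Combining via Bayes' formula, as in Theorem~\ref{thm 1}, shows that the conditional law of $N_{\ell+1}$ given $A$ agrees asymptotically with its unconditional Binomial$(k, p_{\ell+1})$ law, which converges in total variation to Poisson$(\lambda)$; this, together with the tail bound showing no component of size $\ge \ell+3$, yields \eqref{ell border}.

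The same framework handles the remaining bullets by swapping which quantities blow up. For the subcritical $r = o(n^{\ell/(\ell+1)})$, one computes $\ex (N_{\ell+1} \mid A) = k p_{\ell+1} \cdot \p(X_2 + \cdots + X_k = r - (\ell+1))/\p(A)$, and the ratio of probabilities tends to $1$ by local CLT smoothness, while $k p_{\ell+1} \to 0$, giving $L_1 \le \ell+1$ w.h.p.; for the supercritical $n^{\ell/(\ell+1)} = o(r)$, second-moment concentration around $\ex N_{\ell+1} \to \infty$ yields $L_1 > \ell+1$; for $\liminf \log_n r > \ell/(\ell+1)$, each $k p_{i-1}$ with $1 \le i \le \ell+2$ tends to infinity with relative variance $\to 0$, so Chebyshev combined with the same conditioning argument delivers \eqref{bullet 3}. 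The main obstacle is the uniform local CLT for the restricted sum, since the distribution of $X \mid X \le \ell$ depends on $n$ through $x = x(n) \to 0$: for $\ell = 1$ the restricted variable is simply Bernoulli, reducing everything to Stirling on binomial coefficients as in Theorem~\ref{thm 1}, but for $\ell \ge 2$ one must either decompose $X_j = \mathbf{1}(X_j = 1) + X_j \mathbf{1}(X_j \ge 2)$ to reduce to a Bernoulli sum plus a small perturbation, or directly invoke a classical uniform local CLT for bounded lattice variables whose parameters vary with $n$.
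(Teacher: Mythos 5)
Your plan has the same architecture as the paper's (the same tilt \eqref{good x}, the same computations showing $k p_{\ell+1}\to\lambda(t,\ell,M)$ while $kp_i\to\infty$ polynomially for $i\le\ell$ and $\sum_{i\ge\ell+2}kp_i=o(1)$, then conditioning away the rare large values and transferring the Binomial-to-Poisson limit for $N_{\ell+1}$ through a ratio-of-point-probabilities argument), but there is a concrete gap in your main analytic step. With $x$ chosen by $kp_1(x)=r$, the mean of the weighted sum being conditioned on is not $r$: it is $r+2kp_2+3kp_3+\cdots=r+\Theta(kp_2)$ with $kp_2\asymp kx^2\asymp r^2/n$, and the same off-centering persists for your restricted sum after conditioning on $K_0$. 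In the critical regime $r\sim t\,n^{\ell/(\ell+1)}$ this offset is of the \emph{same} order as the standard deviation $\sqrt{kp_1}=\sqrt r$ when $\ell=2$, and of strictly \emph{larger} order when $\ell\ge3$ (compare $n^{(\ell-1)/(\ell+1)}$ with $n^{\ell/(2(\ell+1))}$). Hence your assertion that the restricted sum hits $r-R_0$ with probability asymptotic to $1/\sqrt{2\pi r}$ is false for every $\ell\ge2$: the point probability carries an extra factor $\exp(-\Theta((kp_2)^2/r))=\exp(-\Theta(r^3/n^2))$, a nontrivial constant at $\ell=2$ and $o(1)$ for $\ell\ge3$. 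Moreover, the ``classical uniform local CLT for bounded lattice variables'' you propose to invoke cannot rescue this, because central local limit theorems give only an additive error $o(1/\sqrt r)$, which is useless at a point lying $\omega(\sqrt r)$ from the mean. So the obstacle you identify at the end (the $n$-dependence of the law of $X$ given $X\le\ell$) is not the real one; the real one is that, under your tilt, the conditioning target sits in the moderate-deviation zone.

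The gap is repairable, in either of two ways compatible with the rest of your outline: (i) re-center by taking $x$ to be the honest saddle point solving $k\,\ex X=r$ rather than $kp_1(x)=r$; this changes $x$ only by a relative $O(x)$, so $\lambda(t,\ell,M)$ in \eqref{lambda ell} and the asymptotics in \eqref{bullet 3} are unaffected, and the target is then genuinely central so your local CLT step becomes legitimate; or (ii) keep \eqref{good x} but prove a local limit estimate with \emph{relative} error control in the moderate-deviation zone, noting that only ratios of point probabilities at targets differing by $O(\log n)$ enter your Bayes comparison, and such a shift changes the log-probability by $O(\log n\cdot kp_2/r)=O(\log n\cdot r/n)\to0$, so the offending Gaussian factors cancel. (The paper's own sketch leans on the analogous assertion $\p(X_1+\cdots+X_k=r)\asymp1/\sqrt r$ and leaves this re-centering implicit.) A secondary caution: for the second bullet, Chebyshev plus brute-force ``overpowering the conditioning'' does not cover the whole range $n^{\ell/(\ell+1)}=o(r)$, since $\e N_{\ell+1}$ may grow arbitrarily slowly and then $\p(N_{\ell+1}=0)$ need not be $o(\p(X_1+\cdots+X_k=r))$; there you must use the approximate-independence (ratio) argument instead, exactly the subtlety the paper flags in the parenthetical closing its proof.
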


\begin{proof}

We will sketch two computations that differ from the situation of Theorems \ref{thm 1} and \ref{thm 1.5}.
The remaining details  for all claims are similar to  arguments given in the proof of Theorem  \ref{thm 1}, although not as delicate,  and we 
shall omit the details. 
%prefer to not give these details,  nor the associated  big O error analysis.

The key computation for the borderline behavior in \eqref{ell border} is that \eqref{good x} entails
$$
x \sim \frac{2 m_1}{m_2} \ \frac{r}{k}  \sim \frac{2 m_1}{m_2} \frac{t}{k^{1/(1+\ell)}}
$$  
hence $\e N_{\ell+1}$  is asympototic to 
$$
k \, p_{\ell+1} \sim k \ \frac{m_{\ell+2}  }{m_1 (\ell+2)!}  \ x^{\ell+1} \sim
 \frac{2^{\ell+1} m_1^\ell  m_{\ell+2}}{ (\ell+2)! \, m_2^{\ell+1}} \ t^{\ell+1} =: \lambda(t,\ell,M). 
$$
Recall that $ N_{\ell+1}$  counts how many of the $X_1,\ldots,X_k$ are equal to $\ell+1$, which is the same as the number of $Y_1,\ldots,Y_k$ which are equal to $\ell+2$.

For \eqref{bullet 3},  consider a subsequence along which  $\alpha = \lim \log_n r $ \mbox{$\in (\ell/(\ell+1),1).$}   By \eqref{good x} and Lemma \ref{lemma small x},  $x \sim \frac{2 m_1}{m_2} \ \frac{r}{k} \approx n^{\alpha-1}$ (with the usual large deviation theory notation,   $a_n \approx b_n$  to mean that $\log a_n \sim \log b_n$) and hence
$$
\e N_{i-1} = k \frac{ m_1 x}{M(x)} \ \frac{m_i x^{i-1} }{m_1 \, i!} \sim  k  \ \frac{m_i x^{i-1} }{m_1 \, i!}
$$
so that  $ \e N_{i-1} \approx n^{1 + (\alpha-1)(i-1)} = n^\delta$,  with $\delta = 1 + (i-1)(\alpha-1) >0$
using $i-1 \le \ell+1$ and $1-\alpha > 1/(\ell+1)$.    The marginal distribution of $N_{i-1}$ is Binomial($k, p_{i-1}(x))$,  and a moderate deviation bound, that the probability of being more than $c \log n$ standard deviations away from the mean is $o(1/n)$, in conjunction with an overall argument that $\p(X_1+\dots+X_k=r) \asymp 1/\sqrt{r}$, establishes \eqref{bullet 3}.  (One could prove a stronger version of \eqref{bullet 3},  allowing for example
$r \sim n^{\ell/(\ell+1)} \log \log \log n$,  but then  for  $i=\ell+2$,  $\e N_{i-1}$ would grow very slowly,  and instead of brute force ``overpowering the conditioning'',  one would have to argue some approximate independence between $N_{i-1}$ and the event $X_1+\dots+X_k=r$,  as we did in the proof of Theorem  
\ref{thm 1}.)
\end{proof}

\subsection{A completely effective version of Theorem \ref{thm 1}}

Theorem \ref{thm 1} establishes
the asymptotic behavior in the critical regime for having components of size 3. In this section, we wish to highlight that by equally elementary but slightly more tedious calculations, one can just as easily provide quantitative bounds, i.e., completely effective inequalities for the relevant probabilities for all finite values of the parameters, which yield the asymptotic behavior as a corollary. % and allow for completely effective estimates of the related combinatorial sequences. 
We utilize the following lemmas in the proof of Theorem~\ref{SD thm 1}.  

\begin{lemma}{\cite[Section VI.10, Problem 34]{Feller}}\label{Poisson:lemma}
Suppose $0<p<1$ and $n\in \mathbb{N}$.  Let $\lambda = n\, p$, and define $b(k;n,p) := \binom{n}{k} p^k (1-p)^{n-k}$, and $p(k;\lambda) := \frac{\lambda^k}{k!}e^{-\lambda}$. 
Then we have
\[ p(k;\lambda)\, e^{-\frac{k^2}{n-k} - \frac{\lambda^2}{n-\lambda}} < b(k; n, p) < p(k; \lambda)\, e^{k\,\lambda/n}. \]
\end{lemma}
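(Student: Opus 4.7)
The plan is to study the ratio
\[
R(k) := \frac{b(k;n,p)}{p(k;\lambda)} = \frac{n!/(n-k)!}{n^k} \cdot (1-p)^{n-k}\, e^{np} = \alpha_k \cdot \beta_{n,k,p},
\]
where $\alpha_k := \prod_{j=0}^{k-1}(1-j/n)$ captures the ``birthday-problem'' deficit between the falling factorial $(n)_k$ and $n^k$, and $\beta_{n,k,p} := (1-p)^{n-k} e^{np}$ captures the discrepancy between the binomial and Poisson weights once the falling-factorial part is factored out. I would bound $\log R(k) = \log \alpha_k + \log \beta_{n,k,p}$ above and below, using only two standard inequalities, namely $\log(1-x) \le -x$ and $-\log(1-x) \le x/(1-x)$, both valid for $x \in [0,1)$.

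For the upper bound, the first factor is immediate: $\alpha_k \le 1$, so $\log \alpha_k \le 0$. For the second, I would use the algebraic rearrangement
\[
\log \beta_{n,k,p} = np + (n-k)\log(1-p) = kp + (n-k)\bigl[\log(1-p)+p\bigr],
\]
in which the bracket is $\le 0$ by $\log(1-p) \le -p$. Combining, $\log R(k) \le kp = k\lambda/n$, which is exactly the claimed upper bound.

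For the lower bound, I would invoke the Taylor series $-\log(1-x) = \sum_{j \ge 1} x^j/j$ in both factors. For $\alpha_k$, termwise application of $-\log(1-x) \le x/(1-x)$ followed by replacing $n-j$ by the smaller $n-k$ gives
\[
-\log \alpha_k \le \sum_{j=0}^{k-1} \frac{j/n}{1-j/n} \le \frac{k(k-1)}{2(n-k)} \le \frac{k^2}{n-k}.
\]
For $\beta_{n,k,p}$, dropping the nonnegative $kp$ term from the rearrangement above leaves only the Taylor tail $-(n-k)\sum_{j \ge 2} p^j/j$, bounded in absolute value by $(n-k)\sum_{j \ge 2} p^j \le np^2/(1-p) = \lambda^2/(n-\lambda)$. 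Adding the two estimates yields the stated lower bound.

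The argument is genuinely routine and involves no real obstacle. The one piece of bookkeeping worth flagging is to keep the $+kp$ term visible during the rearrangement of $\log \beta_{n,k,p}$, so that (i) the upper bound comes out as the clean $kp$ and (ii) in the lower bound the $\lambda$-dependence is isolated in the $\beta$-factor while the $k$-dependence is isolated in the $\alpha$-factor, matching the split in the stated inequality.
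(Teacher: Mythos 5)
Your argument is correct: the ratio decomposition $b/p = \alpha_k\beta_{n,k,p}$, the upper bound via $\alpha_k\le 1$ and $\log(1-p)\le -p$, and the lower bound via $-\log(1-x)\le x/(1-x)$ for the falling-factorial part together with the Taylor tail $-(n-k)\sum_{j\ge 2}p^j/j \ge -\lambda^2/(n-\lambda)$ all check out (note $n(1-p)=n-\lambda$), and this is essentially the standard derivation. The paper itself gives no proof, citing the statement to Feller (Section VI.10, Problem 34), and your route is the expected one for that exercise, so there is nothing to reconcile beyond the implicit standing assumption $0\le k<n$ needed for the left-hand bound to make sense.
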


\begin{lemma}\label{binomial:error}
Suppose $0<p<1$, $p\,n\geq 1$ and $0< k = p\, n + h<n$.  Put
\[ \beta = \frac{1}{12k} +\frac{1}{12(n-k)}, \]
%\[ \beta_1 = \frac{1}{12k+1} +\frac{1}{12(n-k)+1}. \]
Let $b(k; n,p)$ denote the point probability that a Binomial random variable with parameters $n$ and $p$ is equal to $k$, and $q = 1-p$.
We have
\[ \sqrt{2\pi\,p\,q\,n}\, b(k; n, p) < \exp\left(-\frac{h}{2pn}+\frac{h}{2qn}-\frac{h^2}{pn}-\frac{h^2}{qn}\right). \]
We also have
\[ \sqrt{2\pi\,p\,q\,n}\, b(k; n, p) > 
\begin{cases} 
\exp\left(-\beta + h \frac{qn}{qn-h} + \frac{h^2}{qn-h} + \frac{h}{2(qn-h)}\right) & h>0, \\
\exp\left(-\beta + h \frac{pn}{pn-h} + \frac{h^2}{pn-h} + \frac{h}{2(pn-h)}\right) & h<0.
\end{cases}
\]
\end{lemma}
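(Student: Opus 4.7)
The plan is to prove both inequalities directly from Stirling's formula applied to $b(k;n,p) = \binom{n}{k}p^k q^{n-k}$, combined with one-sided Taylor-type bounds for $\ln(1\pm x)$ sharp enough to produce the exact exponents claimed.

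First I would invoke the classical two-sided Stirling estimate
\begin{equation*}
m! = \sqrt{2\pi m}\,(m/e)^m\, e^{r_m}, \qquad \tfrac{1}{12m+1} < r_m < \tfrac{1}{12m},
\end{equation*}
applied to each of $n!$, $k!$, $(n-k)!$, and multiply through by $\sqrt{2\pi pqn}$ to obtain
\begin{equation*}
\sqrt{2\pi\,p\,q\,n}\; b(k;n,p) \;=\; \sqrt{\frac{n^2\,pq}{k(n-k)}}\;\Bigl(\frac{np}{k}\Bigr)^{\!k}\Bigl(\frac{nq}{n-k}\Bigr)^{\!n-k}\; e^{\,r_n - r_k - r_{n-k}}.
\end{equation*}
The Stirling residual $r_n - r_k - r_{n-k}$ is confined to the interval $[-\beta,\,1/(12n)]$, which is exactly what produces the $-\beta$ term in the lower bound and contributes only a harmless factor close to $1$ in the upper bound.

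Next I would substitute $k = pn+h$, $n-k = qn-h$, and write $u = h/(pn)$, $v = h/(qn)$, so that the logarithm of the remaining factor becomes
\begin{equation*}
E(h) \;=\; -\tfrac12\bigl[\ln(1+u)+\ln(1-v)\bigr] - (pn+h)\ln(1+u) - (qn-h)\ln(1-v).
\end{equation*}
The task is to squeeze $E(h)$ between the asserted exponents. For the \textbf{upper} bound on $\sqrt{2\pi pqn}\,b$, each of the two main-factor logarithms should be \emph{lower}-bounded; a convenient choice is $\ln(1+x) \geq x - x^2/2$ (for $x \geq 0$) together with the mirror inequality for $\ln(1-x)$, after which the $-\tfrac12 \ln(\cdot)$ contributions supply the linear terms $-h/(2pn)+h/(2qn)$ and the quadratic contributions supply $-h^2/(pn)-h^2/(qn)$, matching the stated exponent after routine algebra. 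For the \textbf{lower} bound I would instead use the \emph{upper} estimate coming from the integral remainder
\begin{equation*}
\ln(1+x) = x - \int_0^x \frac{x-t}{(1+t)^2}\,dt \;\leq\; x - \frac{x^2}{2(1+x)^2},
\end{equation*}
and its mirror for $\ln(1-x)$; the identity $(1+u) = k/(pn)$ then manufactures denominators of the form $pn-h$ and $qn-h$, precisely the shape of the asserted expressions $h\,qn/(qn-h)$, $h^2/(qn-h)$, and $h/(2(qn-h))$. The case split between $h>0$ and $h<0$ reflects which of $u,v$ is positive: in each sign regime a different arrangement of the one-sided integral bounds is required so that every error term has a definite sign.

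The main obstacle will be bookkeeping rather than cleverness: four separate logarithmic terms, two from the main factor and two from the square-root normalization, each require a carefully chosen one-sided Taylor inequality, and the resulting fragments must be recombined so that the final exponent matches the asserted form term-by-term, rather than only asymptotically. Once the correct four inequalities are selected for each sign regime, the remaining calculation reduces to routine algebraic simplification, and the two halves of the Stirling sandwich combine with the two Taylor sandwiches to deliver the stated two-sided bound on $\sqrt{2\pi pqn}\,b(k;n,p)$.
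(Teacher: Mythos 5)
Your skeleton is the same one the paper relies on: the paper's proof is a two-line sketch citing the argument of Bollob\'as, Chapter 1 (Stirling's formula for $n!$, $k!$, $(n-k)!$ plus one-sided logarithmic inequalities, namely $-t/(1-t)<\log(1-t)<-t$ and $0<\log(1+t)<t$), and your identity for $E(h)$ after substituting $k=pn+h$, $n-k=qn-h$, $u=h/(pn)$, $v=h/(qn)$, together with the observation that the Stirling residual $r_n-r_k-r_{n-k}$ lies in $[-\beta,\,1/(12n)]$, is exactly the right starting point. The difference is that you reach for second-order Taylor bounds where the paper uses first-order ones; that is not itself a problem.

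The genuine gap is your assertion that the resulting exponent "matches the stated exponent after routine algebra." It does not, and cannot. With your own inequalities one gets, for $h>0$, $-k\ln(1+u)\le -h-\frac{h^2}{2pn}+\frac{h^3}{2(pn)^2}$ and $-(n-k)\ln(1-v)\le h-\frac{h^2}{2qn}$, so the quadratic contribution of the main factor is $-\frac{h^2}{2pn}-\frac{h^2}{2qn}=-\frac{h^2}{2pqn}$, the usual Gaussian exponent --- \emph{half} of the $-\frac{h^2}{pn}-\frac{h^2}{qn}$ you claim to recover. No rearrangement of one-sided Taylor estimates bridges this factor of two, because the displayed upper bound is false as stated: for $n=100$, $p=q=1/2$, $k=60$ (so $h=10$) the left side is $\sqrt{50\pi}\,\binom{100}{60}2^{-100}\approx 0.136$, while the asserted right side is $e^{-0.1+0.1-2-2}=e^{-4}\approx 0.018$. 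The displayed lower bound fares no better for $h>0$: its exponent $-\beta+h\frac{qn}{qn-h}+\frac{h^2}{qn-h}+\frac{h}{2(qn-h)}$ is large and positive, whereas $\sqrt{2\pi pqn}\,b(k;n,p)$ never exceeds roughly $1$. So the step in your plan that would fail is precisely the final "recombination so that the exponent matches term-by-term"; a correct write-up must either carry the $-\frac{h^2}{2pn}-\frac{h^2}{2qn}$ (and suitably signed lower-bound) exponents that your method actually produces, thereby correcting the statement, or explicitly flag that the target as printed is unreachable.
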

\begin{proof}
This lemma is an adaptation of the arguments in~\cite[Chapter 1]{Bollobas}.  Here we utilize the inequalities, valid for all $0<t<1$,
\[\begin{array}{ccccc} 
\frac{-t}{1-t} &<& \log(1-t) &<& -t \\
0 &<& \log(1+t) &<& t.
\end{array}\]
The main difference is that we do not place any added restrictions on $h$ other than $-n\,p < h < n\,p$. 
\end{proof}

%If one is willing to add more assumptions on $h$, which are valid for us in the asymptotic regime, then the lemma above can be sharpened so that the term in the exponent is $O(h^2/n)$. 

%The following lemma is an application of the Hoeffding bound.

\begin{lemma}\label{binomial:deviations}
Suppose $N$ is a Binomial distribution with parameters $n$ and $p$, with $\mu := \sup_n \e N < \infty$.  
Then we have 
\[ \p(N \ge \log(n)) < \frac{1}{n} \]
for all $n \geq n_0$, where we may take 
\[ n_0 = \exp\left(\mu\, e^{2} \right). \]
\end{lemma}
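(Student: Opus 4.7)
The plan is to use exactly the Hoeffding/Chernoff-type tail bound that the paper has already invoked in the proof of Theorem~\ref{thm 1} (the one attributed to \cite{baxendale}, equation (41)): for Binomial($n,p$) and any $y>1$,
\[
\p(N \ge y\, \e N) \le y^{-y\,\e N}\, e^{(y-1)\,\e N}.
\]
Given the hypothesis $np \le \mu$ and the target tail $\log n$, I would set $y := \log n /\e N$, so $y\,\e N = \log n$, and the bound becomes
\[
\p(N \ge \log n) \le \left(\frac{\e N}{\log n}\right)^{\log n} e^{\log n - \e N}.
\]
This requires $y>1$, i.e.\ $\log n > \e N$, which will be automatic from the regime $n \ge \exp(\mu e^2)$ since then $\log n \ge \mu e^2 > \mu \ge \e N$.

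Next I would observe that the right-hand side above is monotone increasing in the parameter $\e N$ on the interval $(0,\log n)$: differentiating its logarithm in $\mu' := \e N$ gives $\log n/\mu' - 1 > 0$ there. So the worst case within the constraint $\e N \le \mu$ is $\e N = \mu$, yielding the clean bound
\[
\p(N \ge \log n) \le \left(\frac{\mu}{\log n}\right)^{\log n} e^{\log n - \mu}.
\]
Taking logarithms, the desired conclusion $\p(N \ge \log n) < 1/n$ is equivalent to
\[
2 - \frac{\mu}{\log n} < \log \log n - \log \mu.
\]

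To close the argument, I would verify this inequality under $n \ge \exp(\mu e^2)$. That hypothesis is exactly $\log n \ge \mu e^2$, which gives two facts: (i) $\log \log n \ge 2 + \log \mu$, and (ii) $\mu/\log n \le e^{-2} < 1$. Fact (i) makes the right-hand side at least $2$, while fact (ii) makes the left-hand side strictly less than $2$ (indeed at most $2-e^{-2}$ at $n=\exp(\mu e^2)$), so the required inequality holds strictly.

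The only real obstacle I anticipate is bookkeeping at the boundary: checking that the monotonicity step indeed goes in the convenient direction (one must work on $(0,\log n)$, not past it), and that the strictness of $<1/n$ survives the chain of inequalities at the threshold $n = \exp(\mu e^2)$. Both are handled by the explicit computation above, so the proof is essentially a one-line application of the tail bound followed by an elementary calculus check.
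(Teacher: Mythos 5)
Your proposal is correct and follows essentially the same route as the paper: the same Hoeffding bound from \cite{baxendale}, the same choice $y=\log n/\e N$, and the same reduction to the inequality $\log\mu + 2 - \mu/\log n < \log\log n$ under $\log n \ge \mu e^2$. You are in fact slightly more careful than the paper at two points --- justifying the passage from $\e N$ to $\mu$ by an explicit monotonicity check, and retaining the $\mu/\log n$ term rather than discarding it --- but these are refinements of the identical argument.
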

\begin{proof}
The Hoeffding bound for the binomial distribution, see for example~\cite[equation (41)]{baxendale}, implies that for any $y>1$, we have
\[ \p(N \ge y \, \e N) \le y^{-y\, \e N} e^{(y-1) \e N}. \]
Furthermore, taking $y = \frac{1}{\e N} \log(n)$, we then solve for $n$ in 
\[ y^{-y\, \e N} e^{(y-1) \e N} < 1/n; \]
rearranging, and taking the logarithm, we wish to satisfy
\[ \log(\e N) + 2 - \frac{\e N}{\log(n)} < \log \log n. \]
Next, we may ignore the term $\frac{\e N}{\log(n)}$ as long as $n > e^{\e N}$, in which case we obtain after exponentiating twice 
\[ n > e^{e^2 \e N}, \]
and replacing $\e N$ with $\mu$ we obtain the conclusion.
%More specifically, we have ... (do we need to be more specific and write out the full argument?)
\end{proof}

\ignore{We take
\begin{equation}%\label{rho}
   \rho := \sup_{i \ge 3}  \left( \frac{m_i}{i!} \right)^{1/i},
\end{equation}
noting that the assumption that $M$ has strictly  positive radius of convergence is equivalent to the condition  that $\rho < \infty$. 
}

\begin{lemma}\label{sum:deviations}
Let $N_3, N_4, \ldots$ be defined as in~\eqref{eq:N}, with $p_i$ given by~\eqref{dist X}, $\rho$ given by~\eqref{rho}, and $x = \frac{2m_1}{m_2}\, \frac{n-k}{k}$.  
%, $\rho := \sup_{i \ge 3}  \left( \frac{m_i}{i!} \right)^{1/i},$ and $x = \frac{2m_1}{m_2}\, \frac{n-k}{k}$.  
Then%Then 
\[ \p(3N_3 + 4N_4 + \ldots \ge \log(n)) \leq \frac{1}{n} \]
for all $n \geq n_3$, where $n_3$ is the smallest value which satisfies $x\, \rho \leq \frac{1}{2}$ and 
\begin{equation}\label{n3}
 \frac{n(n-k)^3}{k^2} \leq \frac{m_1}{2\rho^4} \left(\frac{2m_1}{m_2}\right)^3.
 \end{equation}
\end{lemma}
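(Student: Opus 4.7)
The plan is to prove $\p(W \ge \log n) \le 1/n$, where $W := 3N_3 + 4N_4 + \cdots = \sum_{j=1}^k X_j \mathbf{1}(X_j \ge 3)$, by a direct Markov's inequality. Since $W \ge 0$ we have $\p(W \ge \log n) \le \e W / \log n$, so it suffices to show that under the hypothesis, $\e W \le C'/n$ for an explicit constant $C'$ depending only on $m_1, m_2, \rho$.

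The first step is a uniform geometric-tail bound on the distribution of $X$. For $i \ge 3$, the definition \eqref{rho} of $\rho$ (applicable since $i+1 \ge 4 \ge 3$) combined with \eqref{dist X} gives $p_i(x) \le (\rho x)^{i+1}/M(x)$. Because $x \rho \le 1/2$ by hypothesis, the series $\sum_{i \ge 3} i (\rho x)^{i+1}$ converges by geometric comparison; an explicit evaluation using $\sum_{i \ge 3} i y^i = y^3(3-2y)/(1-y)^2 \le 8 y^3$ for $y \le 1/2$ yields
\[
\sum_{i \ge 3} i\, p_i \;\le\; \frac{8 (\rho x)^4}{M(x)} \;\le\; \frac{8 \rho^4 x^3}{m_1},
\]
where the last inequality uses the trivial lower bound $M(x) \ge m_1 x$. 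By linearity, $\e W = k \sum_{i \ge 3} i\, p_i \le 8 k \rho^4 x^3 / m_1$.

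Now substitute $x = (2m_1/m_2)(n-k)/k$, so that $x^3 = (2m_1/m_2)^3 (n-k)^3 / k^3$, and invoke \eqref{n3} in the rearranged form $(n-k)^3/k^2 \le m_1 (2m_1/m_2)^3 / (2\rho^4 n)$. One obtains $\e W \le 4 (2m_1/m_2)^6 / n$. Markov's inequality then gives
\[
\p(W \ge \log n) \;\le\; \frac{\e W}{\log n} \;\le\; \frac{4 (2m_1/m_2)^6}{n \log n} \;\le\; \frac{1}{n}
\]
as soon as $\log n \ge 4 (2m_1/m_2)^6$, so one simply enlarges $n_3$ to enforce this along with $x \rho \le 1/2$ and \eqref{n3}.

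No real obstacle arises here: the hypothesis \eqref{n3} is strong enough (it essentially corresponds to $r = n-k$ of order at most $n^{1/3}$) that $\e W$ is already of order $1/n$ before any concentration reasoning, and the $\log n$ denominator in Markov's bound absorbs the remaining constant. This matches the remark in the proof of Theorem \ref{thm 1} that the $B_3$ estimate is ``not as delicate'' as the $B_2$ estimate, the latter requiring the Hoeffding-type bound of Lemma \ref{binomial:deviations} since $\e N_2$ is of constant order while $\e W$ here is vanishing. The only care needed is in bookkeeping the constants arising from the geometric series and from the substitution for $x$.
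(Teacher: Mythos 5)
Your argument is correct and is, at bottom, the same first-moment estimate as the paper's, but the reduction of the event differs in a way that matters for the exact statement. The paper simply observes that $\{3N_3+4N_4+\cdots \ge \log n\}$ is contained in $\{N_3+N_4+\cdots>0\}$ (for $n\ge 2$), and then bounds $\p(N_3+N_4+\cdots>0)\le \e(N_3+N_4+\cdots)=k\,\p(X\ge 3)\le 2k\rho^4x^3/m_1$, which under \eqref{n3} is already $\le 1/n$ with no use of the $\log n$ threshold. You instead apply Markov to the weighted sum $W$, which costs you a larger constant in the expectation (your $8$ versus the paper's $2$, since you sum $\sum i\,p_i$ rather than $\sum p_i$) and forces you to spend the $\log n$ in the denominator to absorb that constant — hence your need to \emph{enlarge} $n_3$ with a third condition of the form $\log n\ge C$. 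That extra condition means you prove a slightly weaker statement than the lemma as written, where $n_3$ is characterized by exactly the two conditions $x\rho\le 1/2$ and \eqref{n3}; the paper's containment argument avoids this entirely, so if you want the lemma verbatim you should switch to bounding $\p(N_3+N_4+\cdots>0)$ by its mean. One further remark on your constant $4(2m_1/m_2)^6$: the right-hand side of \eqref{n3} as printed in the lemma appears to have the ratio inverted — in the proof of Theorem \ref{SD thm 1} the same condition is stated with $\bigl(\tfrac{m_2}{2m_1}\bigr)^3$, which is what makes the substitution cancel cleanly to give $2k\rho^4x^3/m_1\le 1/n$ (and would reduce your bound to $4/(n\log n)$). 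You took the printed version at face value, which is reasonable, but be aware that with the literal constant neither your argument nor the paper's closes unless $2m_1\le m_2$.
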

\begin{proof}
We have 
\[ \p(3N_3 + 4N_4 + \ldots \ge \log(n)) \leq \p(N_3 + N_4 + \ldots > 0) \leq \e(N_3 + N_4 + \ldots) \leq k  \frac{2 \rho^4x^3}{m_1 }. \]
By plugging in the appropriate values for $x$ and rearranging, we obtain the result. 
\end{proof}

\begin{theorem}\label{SD thm 1}
Consider an assembly as governed by \eqref{def M} and \eqref{exponential relation}, and assume further that $m_1,m_2,m_3>0$ and $M(\cdot)$ has a strictly positive radius of convergence. 
Given $n \geq k \geq 1$, pick an assembly uniformly from the $p(n,k)$ choices having exactly $k$ components. 
Let
\[ x = \frac{2m_1}{m_2}\, \frac{n-k}{k}, \qquad b_0 = m_1\, x, \qquad b_1 = \frac{m_2 x^2}{2}, \qquad b_2 = \frac{m_3 x^3}{6}, \]
and define functions
\[ f(h,n,p) := -\frac{h}{2pn}+\frac{h}{2qn}-\frac{h^2}{pn}-\frac{h^2}{qn}, \]
\[ g(h,n,p) := h \frac{pn}{pn-h} + \frac{h^2}{pn-h} + \frac{h}{2(pn-h)}. \]
Finally, define 
\[ \lambda \equiv \lambda(n,k) := \frac{m_3}{6m_1} x^2, \qquad \mbox{ and } \qquad p := \frac{b_1}{b_0+b_1}. \] 
Then the number of components of size~$3$, $D_3(n,k)$, satisfies
\[ \p(D_3(n,k) = m) \geq \frac{\lambda^k}{k!} e^{-\lambda}\, e^{-\frac{k^2}{n-k} - \frac{\lambda^2}{n-\lambda}} \frac{n-1}{n} \frac{\exp(-\beta + g(2m+\log(n),k,p))}{ \exp(f(2m+\log(n), k, p))}, \]
and
\[ \p(D_3(n,k) = m)  \leq \frac{\lambda^k}{k!} e^{-\lambda}\, e^{\frac{\lambda m}{k}}\frac{\frac{1}{n}+\frac{1}{\sqrt{2\pi\, r}} \exp(f(\log(n), k-\log(n), p))}{\frac{n-1}{n} \frac{1}{\sqrt{2\pi\,r}} \exp(-\beta + g(\log(n), k-\log(n), p))}; \]
and the largest component $L_1$ satisfies 
\[ e^\lambda \p(L_1 = 2)  \leq  \frac{ \exp(f(0, k, p))}{\frac{n-1}{n}  \exp(-\beta + g(\log(n), k, p))} \]
and
\[  e^\lambda \p(L_1 = 2)  \geq e^{ - \frac{\lambda^2}{n-\lambda}}\, \frac{ \exp(-\beta + g(0, k, p))}{\frac{n-1}{n} \exp(f(\log(n), k, p))}\left(1-\frac{1}{n(1-b_2/(m_1 x))}\right) \]
 for all $n \geq \max(e^{\mu\, e^2}, n_3)$, where $\mu = \sup_{n,k} \frac{k\, b_1}{b_0+b_1},$ and $n_3$ is the smallest positive value such that $x \, \rho \leq \frac{1}{2}$ and satisfies~\eqref{n3}. 
\end{theorem}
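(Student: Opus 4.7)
The plan is to make every step in the proof of Theorem \ref{thm 1} quantitative by invoking Lemmas \ref{Poisson:lemma}, \ref{binomial:error}, \ref{binomial:deviations}, and \ref{sum:deviations} in place of the $O$-bounds that appeared there. The distributional backbone remains the conditioning relation \eqref{CRY T}; with $X_j := Y_j - 1$ distributed as in \eqref{dist X}, the event $\{Y_1 + \cdots + Y_k = n\}$ becomes $\{S = r\}$ where $S := X_1 + \cdots + X_k$, and $D_{i+1}(n,k) = N_i := \#\{j \le k : X_j = i\}$. A direct calculation shows the given $x$ makes the conditional Bernoulli parameter $p := b_1/(b_0+b_1)$ equal exactly to $r/n$, so the target $r$ sits at shift $h := r - pk = r^2/n$ from the Binomial$(k,p)$ mean, a shift the functions $f$ and $g$ of Lemma \ref{binomial:error} are built to absorb.

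Each of the four conclusions then follows by bounding, from above or below, one of the two ratios
\[
\p(D_3(n,k) = m) = \frac{\p(N_2 = m, S = r)}{\p(S = r)}, \qquad \p(L_1 = 2) = \frac{\p(N_{\ge 2} = 0, S = r)}{\p(S = r)},
\]
via the same exact decomposition in each case. Restricting to $\{N_{\ge 3} = 0\}$, the joint law factors as Binomial$(k, p_2)$ in $N_2$ times, conditional on $N_2 = m$, a Binomial$(k-m, p)$ counting $\#\{j : X_j = 1\}$. Lemma \ref{Poisson:lemma} converts the Binomial$(k, p_2)$ mass at $m$ into the Poisson form $(k\lambda)^m e^{-k\lambda}/m!$ (with $k\lambda$ matching $k p_2$ to leading order), with explicit two-sided correction factors $\exp(-k^2/(n-k)-\lambda^2/(n-\lambda))$ and $e^{m\lambda/k}$; Lemma \ref{binomial:error} converts the Binomial$(k-m,p)$ point probability at target $r-2m$ into the stated $f, g$ expressions. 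To close the ratios I would discard atypical configurations using Lemma \ref{binomial:deviations} on $N_2$ and Lemma \ref{sum:deviations} on $3N_3 + 4N_4 + \cdots$, each bounding the relevant bad event by $1/n$ in the prescribed regime; this produces the $(n-1)/n$ and $1/n$ ingredients in the inequalities, and forces the $\log n$ (or $2m + \log n$) shift in the arguments of $f$ and $g$ on the ``overpowering'' side of each ratio.

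The main obstacle is not any single hard estimate but the bookkeeping: each of the four inequalities requires selecting the correct direction in Lemmas \ref{Poisson:lemma} and \ref{binomial:error}, the correct shift $h \in \{0, \log n, 2m + \log n\}$, and the correct assignment of low-probability bad-event mass to numerator versus denominator. Once the matching is fixed, each individual inequality is immediate from the cited lemma, and the theorem is assembled by multiplying the resulting point bounds. The subtractive correction $1 - 1/(n(1 - b_2/(m_1 x)))$ in the lower bound for $e^\lambda\,\p(L_1 = 2)$ arises by summing a geometric series over the contributions of configurations with at least one component of size $\ge 4$, in the same spirit as the saddle-style overpowering argument of Lemma \ref{lemma 24}.
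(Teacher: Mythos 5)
Your proposal follows essentially the same route as the paper's own proof: you quantify the argument of Theorem \ref{thm 1} by splitting on the same bad events (Lemmas \ref{binomial:deviations} and \ref{sum:deviations} giving the $1/n$ and $(n-1)/n$ factors), factoring the law on $\{N_3=N_4=\cdots=0\}$ into a Binomial$(k,p_2)$ count for $N_2$ and a conditional Binomial$(k-k_0,p)$ for $N_1$, converting these via Lemmas \ref{Poisson:lemma} and \ref{binomial:error} with the shifts $h\in\{0,\log n,2m+\log n\}$, and handling the size-$\ge 4$ contribution exactly as the paper does via $k(p_3+p_4+\cdots)/(1-p_2)\le 1/(n(1-b_2/(m_1x)))$ for $n\ge n_3$. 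The bookkeeping you defer is precisely what the paper's proof carries out, so no new idea is needed beyond what you describe.
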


\begin{proof}
%Pick $x$ as per \eqref{good x} \emph{modified} slightly, so that 
%$$
%   k \frac{p_1(x)}{p_0(x)+p_1(x)} = r.
%$$   
%The new choice of $x$ is asymptotic to the old choice. 

We start by defining the events $B_2 :=\{ 2N_2 \ge \frac{1}{2}\log(n) \}$ and $B_3 :=\{ 3N_3 + 4N_4+\cdots \ge \frac{1}{2}\log(n) \}$.
By Lemma~\ref{binomial:deviations}, we have $\p(B_2) < \frac{1}{n}$ for all $n > e^{\mu_2\, e^2}$, where $\mu_2 := \max \frac{k\,m_3 x^2}{3!}$. 
By Lemma~\ref{sum:deviations}, we have $\p(B_3) < \frac{1}{n}$ for all $n > n_3$, where we may take $n_3$ to be the smallest positive value such that
\[ \frac{n(n-k)^3}{k^2} \leq \frac{m_1}{2\rho^4}\left(\frac{m_2}{2m_1}\right)^3. \] 
Combining, the bad event $B := B_2 \cup B_3$ has $\p_x(B)<2/n$ for all $n \geq \max(n_2, n_3)$, and on the complementary event, $B^c$, with $k_1 =r_1 := r_2+r_3 = \log n,$ we have
$K_0 :=N_2+N_3+\cdots \le k_1$ and $R_0:=2 N_2 + 3 N_3 +\cdots \le r_1$.

Conditional on $\{K_0=k_0\}$ and further conditioning on \emph{which} of the $k-k_0$ indices $j\in [k]$ did not contribute to $K_0$, i.e.,  those $j$ for which $X_j=0$ or 1, we have $k-k_0$ independent trials where $p := \p(X_j=1)=1-\p(X_j=0) = p_1/(p_0+p_1) = \frac{b_1}{b_0+b_1},$ which defines the binomial distribution for $N_1$ conditioned on the values of $N_2, N_3, \ldots$. 
%,  and from \eqref{good x} we have $p\sim t/\sqrt{k}$.   
%For the binomial($k,p$) distribution,  the target $r$ is exactly the mean,
 %the binomial point probability at the mean is approximately
%$1/\sqrt{2 \pi k \, p } \sim 1/\sqrt{2 \pi r}$,  with explicit error bounds given by Lemma~\ref{binomial:error}.  %;  the relative error is $O(1/\sqrt{k})$.  
%Likewise, for the perturbations, where $k$ is replaced by $k-k_0$ and the target is replaced by $r-r_0$,   with $0 \le k_0,r_0 \le k_1$,  %(recalling that $k \sim n$), 
%the probability that Binomial$(k-k_0,p)$ hits the point $r-r_0$ is asymptotically $1/\sqrt{2 \pi r}$, again with explicit error bounds given by Lemma~\ref{binomial:error}. 
 %The relative error is uniformly bounded by a contribution on the order of $r_1/r$,   which is$O_t((\log n)/\sqrt{n})$,
%For the relative error, the main contribution comes from the target being at most $r_0$ from the center,  and since the variance is order of $r$, the resulting relative error is $O(r_0^2/r) = O_t((\log n)^2/\sqrt{n}).$ 
We thus obtain bounds on $\p(N_1 + 2N_2 + \ldots = r)$ by splitting it up by $B$ and $B^c$.  Conditional on $B$ or $B^c$, $N_1$ is binomial with parameters $k-k_0$ and $p = p_1/(p_0+p_1)$. 
Let $R(r) := \{N_1 + 2N_2 + \ldots = r\}$, and $\beta = \frac{1}{12r} + \frac{1}{12(k-r)}$. 
Let $f(h,n,p) := -\frac{h}{2pn}+\frac{h}{2qn}-\frac{h^2}{pn}-\frac{h^2}{qn}$, and 
$g(h,n,p) := h \frac{pn}{pn-h} + \frac{h^2}{pn-h} + \frac{h}{2(pn-h)}$. 
With $k_0 = \log(n)$, we have 
\[ \p(R(r) \cap B^c) \leq \p(N_1 = r) \leq \frac{1}{\sqrt{2\pi\, r}} \exp(f(\log(n), k-k_0, p)),\]
 and 
\begin{align*}
\p(R(r) \cap B^c) & \geq  \frac{n-1}{n}\, \max_{0\leq \ell \leq \log(n)}\p(N_1 = r-\ell) \\
& \geq  \frac{n-1}{n}\, \p(N_1 = r-r_2-r_3) \\
& \geq \frac{n-1}{n} \frac{1}{\sqrt{2\pi\,r}} \exp(-\beta + g(\log(n), k-k_0, p)).
\end{align*}
%Also, $\p(R(r) \cap B^c) \leq  \p(N_1 = r)$, and so 
Hence, 
\[\frac{n-1}{n} \frac{1}{\sqrt{2\pi\,r}} \exp(-\beta + g(\log(n), k-\log(n), p)) \leq \p(R(r))  \]
and
\[ \p(R(r)) \leq \frac{1}{n} + \frac{1}{\sqrt{2\pi\, r}} \exp(f(\log(n), k-\log(n), p)). \]

Next, we obtain bounds on $\p(N_2 = m | N_1 + 2N_2 + \ldots = r)$.  We have
\begin{align*}\displaystyle 
 \frac{\p(N_2 = m | R(r) )}{\p(N_2 = m)} & \geq  \frac{\p(B_3){\displaystyle \min_{0\leq\ell\leq \log(n)} \p(N_1 = r-2m-\ell)}}{\p(N_1 + 2N_2 + \ldots = r)}  \\
 &\qquad + \frac{\p(B_3^c) \min_{0\leq \ell \leq \log(n)} \p(N_1 = r-2m-\ell)}{\p(N_1 + 2N_2 + \ldots = r)}  \\
 & > \frac{n-1}{n} \, \frac{\p(N_1 = r-2m-\log(n))}{\p(N_1 + 2N_2 + \ldots = r)}  \\
 & > \frac{n-1}{n} \frac{\frac{1}{\sqrt{2\pi r}} \exp(-\beta + g(2m+\log(n),k,p))}{\frac{1}{\sqrt{2\pi r}} \exp(f(2m+\log(n), k, p))}.
\end{align*}
In the other direction, we have
\begin{align*}
 \frac{\p(N_2 = m | R(r))}{\p(N_2 = m)}  & < \frac{\frac{1}{n} + \p(N_1 = r-2m)}{\p(N_1 + 2N_2 + \ldots = r)} \\
 & \leq \frac{\frac{1}{n}+\frac{1}{\sqrt{2\pi\, r}} \exp(f(\log(n), k-\log(n), p))}{\frac{n-1}{n} \frac{1}{\sqrt{2\pi\,r}} \exp(-\beta + g(\log(n), k-\log(n), p))}. 
 \end{align*}
At this point we apply Lemma~\ref{Poisson:lemma} to $\p(N_2=m)$ to obtain the result. 
%Applying At this point, we note that the assumptions on $n, k, r,$ and $p$ are such that for each fixed $m=0,1,2,\ldots$, with $\lambda = k\, p_2$, we have $m^2/(k-m) \to 0$, $\lambda \to \frac{2 m_1 m_3}{3 m_2^2} t^2,$ so that $m\, \lambda / n \to 0$ and $\lambda^2/(n-\lambda) \to 0$, so by Lemma~\ref{Poisson:lemma} we have 
 %\[ \p(N_2 = m | R(r) )\sim \frac{\lambda^k}{k!}e^{-\lambda}, \]
% which proves convergence in distribution of $D_3(n,k)$ to a Poisson with mean $\lambda$.  
% We also need bounds on $\p(N_3 + N_4 + \ldots >0 | R(r))$.  We have
%\[ \p(N_3+N_4+\dots>0) \le  \frac{k}{M(x)}   \sum_{i \ge 4}  \frac{m_i x^i}{i!} \le \frac{ 2 k \rho^4 x^4}{m_1 x}. \]

Let us now consider $\p(L_1 = 2) = \p(N_1 > 0, N_2 = N_3 = \cdots = 0 | R(r) ).$ 
Let $T_3 = \{N_3 = N_4 = \cdots = 0\}$. 
We have
\begin{align*}
  \p(L_1 = 2) & = \frac{ \p(N_1 >r, N_2=0, T_3)}{\p(R(r))} \\
  & = \frac{\p(N_1 = r | R(r), N_2, T_3)\ \left(1-p_2\right)^k \p(T_3 | R(r), N_2 = 0)}{\p(R(r))} \\
  & = \frac{\p(\mbox{Bin}(k,p) = r)}{\p(R(r))} \left(1-p_2\right)^k \left(1-\frac{p_3 + p_4 + \ldots}{1-p_2}\right)^k.
  \end{align*}
Whence, 
\begin{align*}
  \p(L_1 = 2) & \leq  \p(N_2 = 0) \, \frac{\frac{1}{\sqrt{2\pi\, r}} \exp(f(0, k, p))}{\frac{n-1}{n} \frac{1}{\sqrt{2\pi\,r}} \exp(-\beta + g(\log(n), k, p))} \\
 & \leq e^{-\lambda} \, \frac{ \exp(f(0, k, p))}{\frac{n-1}{n}  \exp(-\beta + g(\log(n), k, p))}.
 \end{align*}
In a similar fashion, we have  
\begin{align*}
 \p(L_1 = 2) & \geq \p(N_2 = 0) \, \frac{\frac{1}{\sqrt{2\pi\, r}} \exp(-\beta + g(0, k, p))}{\frac{n-1}{n} \frac{1}{\sqrt{2\pi\,r}} \exp(f(\log(n), k, p))}\left(1-\frac{p_3 + p_4 + \ldots}{1-p_2}\right)^k \\
 & \geq \p(N_2 = 0) \, \frac{\exp(-\beta + g(0, k, p))}{\frac{n-1}{n} \exp(f(\log(n), k, p))}\left(1-\frac{k\,(p_3 + p_4 + \ldots)}{1-p_2}\right) \\
 & \geq e^{-\lambda} \,e^{- \frac{\lambda^2}{k-\lambda}}\, \frac{ \exp(-\beta + g(0, k, p))}{\frac{n-1}{n} \exp(f(\log(n), k, p))}\left(1-\frac{k\,(p_3 + p_4 + \ldots)}{1-p_2}\right). \\
 & \geq e^{-\lambda} \,e^{- \frac{\lambda^2}{k-\lambda}}\, \frac{ \exp(-\beta + g(0, k, p))}{\frac{n-1}{n} \exp(f(\log(n), k, p))}\left(1-\frac{2k\rho^4x^3}{m_1(1-p_2)}\right).
 \end{align*}
%We know that $\frac{2k\rho^4x^3}{m_1}$ tends to 0, whence the proof is complete.  
Now whenever $n \geq n_3$, we have $\frac{2k\rho^4x^3}{m_1(1-p_2)} \leq \frac{1}{n}$, and so we have 
\[ \p(L_1 = 2)  \geq e^{-\lambda} \,e^{- \frac{\lambda^2}{k-\lambda}}\, \frac{ \exp(-\beta + g(0, k, p))}{\frac{n-1}{n} \exp(f(\log(n), k, p))}\left(1-\frac{1}{n(1-p_2)}\right).
 \]
Finally, since $p_2 = b_2 / M(x) \leq b_2 / (m_1 x)$, we obtain the final expression.
\end{proof}

\section{Acknowledgements}

The authors are grateful to Fred Kochman for helpful suggestions.

\bibliographystyle{acm}
\bibliography{piparcs}%AttacksAndAlignments}

\end{document}